\theoremstyle{plain}
\newtheorem{Theorem}{Theorem}[section]
\newtheorem{Remark}[Theorem]{Remark}
\newtheorem{Lemma}[Theorem]{Lemma}
\newtheorem{Corollary}[Theorem]{Corollary}
\newtheorem{Proposition}[Theorem]{Proposition}
\newtheorem{Definition}[Theorem]{Definition}
\def\R{\mathbb{R}}
\def\N{\mathbb{N}}
\def\C{\mathbb{C}}
\def\d{\partial}
\def\bd{\bar{\partial}}
\def\o{\omega}
\def\p{\varphi}
\def\e{\varepsilon}
\def\a{\alpha}
\def\b{\beta}
\def\bb{\bar{\beta}}
\def\de{\delta}
\def\cp{\chi \varphi}
\def\bcp{\bar{\chi} \varphi}
\numberwithin{equation}{section}
\theoremstyle{plain}
\theoremstyle{definition}
\theoremstyle{remark}
\theoremstyle{definition} 
\theoremstyle{remark}
\newcommand{\st}{\left(}
\newcommand{\dt}{\right)}
\newcommand{\NN}{\mathbb{N}}
\newcommand{\K}{K\"{a}hler}
\begin{document}
\title{\bf On the ADM mass of K\"ahler scalar flat ALE metrics}

\date{\today}

\author{Claudio Arezzo\footnote{International Centre for Theoretical Physics ICTP,\, arezzo@ictp.it} \,and Karen Corrales \footnote{Facultad de Matem\'aticas, Pontificia Universidad Católica de Chile,\, kacorrales@mat.uc.cl}}
\date{\small}

\maketitle \vspace{-1cm}

\begin{abstract}
	\noindent In this paper we study the behaviour of scalar flat K\"ahler ALE spaces and their ADM mass under blow ups. In particular we prove that by blowing up sufficiently many points at sufficiently big mutual distance one can produce scalar flat metrics with arbitrarily large ADM mass. A general machinery for producing scalar flat non Ricci flat ALE spaces of zero ADM mass is also presented, using and integrating previous work by Rollin-Singer (\cite{RS}) and Hein-LeBrun (\cite{HL}).
\end{abstract}


\section{Introduction}\label{intro}

\noindent The existence and geometric properties of Ricci and scalar flat metrics on ALE K\"ahler spaces  with a given structure group at infinity have been subject of intense research in past few decades (see e.g. \cite{C}, \cite{L}, \cite{K}, \cite{J}, \cite{CS}, \cite{HV}, \cite{LV}). In this paper we study to which extent the machinery developed 
in \cite{AP} for constructing constant scalar curvature K\"ahler metrics on blow ups can be used and extended in this setting.
Given the fact that such spaces do not carry holomorphic vector fields (which are the key obstruction for this theory) it is not hard to get the existence of scalar-flat metrics on blow ups of scalar flat ALE spaces (Theorem \ref{cscKALE}). While not the main focus of this paper, we point out that the new ALE scalar flat spaces 
produced in Theorem \ref{cscKALE} can in turn be used as new models for the gluing theory, hence getting, even starting from  compact cscK manifolds, infinitely may new families of cscK manifolds of different topological types. 

\noindent The new interesting problem studied in this paper is to estimate the maximal size of an exceptional divisor that can be inserted in the original space. It is known that in general this question can be very subtle on compact manifolds, carrying implicitily the difficulties hidden in the notion of $K$-stability and in the Tian-Yau-Donaldson Conjecture (see for example Section $7.2$ point (2) in \cite{D} and \cite{S2}).  On top of its clear geometric interest, the main motivation for this question comes from the recent and beautiful work by Hein-LeBrun (\cite{HL}) connecting various geometric and topological quantities of ALE spaces to  their ADM mass, another natural invariant arising from General Relativity recalled in Section 2.2 (\cite{ADM}).

\noindent Hein-LeBrun's topological formula for the ADM mass (Theorem \ref{admeq}) immediately implies that the effect of blowing up one point on the ADM mass is to increase it by an amount proportional to the volume of the exceptional divisor that one can insert, while keeping the scalar curvature zero, hence pointing again to the problem mentioned above.

\noindent The main result of this paper (Theorem \ref{epsilon}) guarantees that this quantity cannot decrease when performing a second blow up at a point sufficiently far away from the first one.  Iterating this scheme for multiple blow ups at sufficiently many points at sufficiently big mutual distance, we get the main applications in terms of ADM mass.
 
\begin{Theorem}\label{mainth1}
Let  $(X,{\bf{\Gamma}}, h, \omega)$ be an ALE scalar flat \K\ manifold with structure group $\bf{\Gamma}$ and ADM mass $e \in \R$. Then for every $\tilde{e} \geq e$ there exists an ALE scalar flat \K\ manifold with structure group $\bf{\Gamma}$ with ADM mass $\tilde{e}$. Such manifold can be obtained from $X$ by blowing up sufficiently many points at sufficiently big mutual distance.
\end{Theorem}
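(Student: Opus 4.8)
The plan is to run an induction on the number of blown-up points, combining the gluing construction of Theorem~\ref{cscKALE}, the topological mass formula of Theorem~\ref{admeq} (Hein--LeBrun) and the monotonicity result of Theorem~\ref{epsilon}, and then to select the gluing parameters so that the ADM mass lands exactly on $\tilde e$. Since blowing up finitely many interior points leaves the structure group at infinity unchanged, every manifold produced below is automatically ALE \K\ with structure group $\bf{\Gamma}$, so only scalar-flatness and the value of the mass need be tracked. Fix points $p_1,\dots,p_N\in X$ whose pairwise distances are bounded below by a constant $D$, together with gluing parameters $\varepsilon=(\varepsilon_1,\dots,\varepsilon_N)$; by Theorem~\ref{cscKALE}, for $D$ large enough and each $\varepsilon_i$ small enough the blow-up $X_\varepsilon$ of $X$ at these points carries a scalar-flat \K\ ALE metric $\omega_\varepsilon$ obtained by gluing in scaled copies of the standard scalar-flat \K\ ALE blow-up model along exceptional divisors $E_1,\dots,E_N$, with $[\omega_\varepsilon]=\pi^{*}[\omega]-\sum_i\varepsilon_i^{2}\,\mathrm{PD}[E_i]$.

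By Theorem~\ref{admeq} the ADM mass of $(X_\varepsilon,\omega_\varepsilon)$ is a cohomological quantity, and since the $E_i$ are pairwise disjoint, evaluating the Hein--LeBrun formula on the class above gives an additive expression $\mathfrak m(\varepsilon)=e+\sum_{i=1}^{N}g(\varepsilon_i)$, where $g$ is a universal continuous function (depending only on the complex dimension and the scaling normalization) with $g(0)=0$ and $g(t)>0$ for $t>0$ small. This is exactly the one-point statement recalled in the Introduction: one blow-up raises the mass by an amount proportional to the volume of the inserted exceptional divisor. In particular $\mathfrak m$ is continuous in $\varepsilon$, equals $e$ at $\varepsilon=0$, and satisfies $\mathfrak m\geq e$ on the admissible region.

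The substantial input is Theorem~\ref{epsilon}. Since the gluing permits only small $\varepsilon_i$, a single exceptional divisor contributes a bounded amount to $\mathfrak m$, so reaching a large target $\tilde e$ forces one to blow up many points; the danger is that the admissible range of $\varepsilon_i$ at a newly added point --- hence its possible contribution to the mass --- could shrink as the ambient geometry is deformed by the earlier blow-ups. Theorem~\ref{epsilon} rules this out: one further blow-up at a point sufficiently far from the previous centers does not decrease the already-accumulated mass, and the admissible size of the new exceptional divisor stays bounded below by a constant depending only on $(X,\omega)$, independently of how many points have been blown up before. Hence there is $\delta>0$ such that at each of $N$ suitably spread-out points the contribution $g(\varepsilon_i)$ can be prescribed to be any value in $[0,\delta]$; by additivity $\mathfrak m$ then attains every value in $[e,\,e+N\delta]$. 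Choosing $N$ with $N\delta\geq\tilde e-e$ and picking $\varepsilon_1,\dots,\varepsilon_N$ with $\sum_i g(\varepsilon_i)=\tilde e-e$ yields a scalar-flat \K\ ALE manifold with structure group $\bf{\Gamma}$ and ADM mass exactly $\tilde e$, obtained by blowing up $N$ points of $X$ at large mutual distance.

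I expect the main obstacle to be precisely the uniform lower bound $\delta$ on the mass available at each step: one must verify that the monotonicity of Theorem~\ref{epsilon} is quantitative and uniform, both in the number of points already blown up and in the mildly deteriorating geometry near the new center, for this is what forces the induction to terminate after finitely many steps. Everything else --- the additivity and continuity of $\mathfrak m$ and the final choice of parameters --- is routine once the topological mass formula and Theorem~\ref{epsilon} are in hand.
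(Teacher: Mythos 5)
Your proposal is correct and follows essentially the same route as the paper: the per-point mass increment is exactly the paper's Proposition \ref{admblow} (computed explicitly there as $\frac{(m-1)\e^{2(m-1)}}{(2m-1)\pi^{m-1}}$, which is your continuous function $g$ vanishing at $0$), the uniform lower bound on the admissible gluing parameter at well-separated points is supplied by Theorem \ref{epsilon}, and the conclusion follows by iterating and tuning the $\e_i$ to hit $\tilde e$. You also correctly flag the one place where care is needed --- the uniformity of $\e_0$ under repeated blow-ups --- which is precisely the content the paper delegates to Theorem \ref{epsilon} and its iteration.
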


\noindent The above theorem indicates that, while $\Gamma$ does not determine the ADM mass of an ALE scalar flat \K\ manifold with structure group $\bf{\Gamma}$ (something easy to observe even for trivial $\Gamma$), the interesting quantity that can be algebraically determined by the structure group is 

$$inf\{\mbox{ADM mass} (X,{\bf{\Gamma}}, h, \omega) \, | \, (X,{\bf{\Gamma}}, h, \omega) {\mbox{ ALE scalar flat \K\ with structure group }} \, \bf{\Gamma} \}.$$

\noindent Theorem \ref{mainth1} gives also a general framework to produce scalar flat, non Ricci flat, ALE spaces with vanishing ADM mass. Specific examples were known thanks to Rollin-Singer (\cite{RS}, Section 6.7) and Hein-LeBrun (\cite{HL}, Theorem 4.7), but our result gives a general machinery to produce them.

\begin{Corollary}\label{maincor}
Let  $(X,{\bf{\Gamma}}, h, \omega)$ be an ALE scalar flat \K\ manifold with structure group $\bf{\Gamma}$ and ADM mass $e <0$. Then there exists an ALE scalar flat \K\ manifold with structure group $\bf{\Gamma}$ with vanishing ADM mass. Such manifold can be obtained from $X$ by blowing up sufficiently many points at sufficiently big mutual distance.
\end{Corollary}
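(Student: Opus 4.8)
The plan is to read Corollary \ref{maincor} off Theorem \ref{mainth1} directly: all the analytic content --- in particular the non-decrease of the ADM mass under a far-away second blow up, Theorem \ref{epsilon}, and its iteration --- is already packaged there, and the corollary adds no new difficulty. Concretely, since by hypothesis the ADM mass of $(X,{\bf{\Gamma}},h,\omega)$ is $e<0$, the value $\tilde e=0$ satisfies $\tilde e\geq e$, so Theorem \ref{mainth1} applied with this $\tilde e$ yields an ALE scalar flat \K\ manifold $\widetilde X$ with structure group ${\bf{\Gamma}}$ and ADM mass exactly $0$, obtained from $X$ by blowing up sufficiently many points at sufficiently big mutual distance. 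That is the whole of the proof, and there is no real obstacle. For context one should note that the hypothesis is not vacuous --- negative-mass ALE scalar flat \K\ spaces are produced by Hein-LeBrun (\cite{HL}, Theorem 4.7) and Rollin-Singer (\cite{RS}, Section 6.7) --- and that, through the Hein-LeBrun formula (Theorem \ref{admeq}), making the mass of $\widetilde X$ vanish is precisely the topological matching condition that Theorem \ref{mainth1} is designed to hit on the nose once the K\"ahler class is adjusted.

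The single extra point worth recording, and the reason this case deserves its own statement, is that $\widetilde X$ is \emph{never} Ricci flat, so the construction genuinely manufactures scalar flat, non Ricci flat ALE spaces of vanishing ADM mass. This is elementary: since $0>e$ the mass must strictly increase, so at least one point is actually blown up and $\widetilde X\to X$ is a nontrivial iterated blow up; each exceptional divisor $E_i$ is homologically nontrivial in $H^2(\widetilde X;\R)$ and pairs nontrivially with its compactly supported Poincar\'e dual, while the new summands $\bigoplus_i\R[E_i]\subset H^2(\widetilde X;\R)$ are independent of $\pi^* H^2(X)$, and the component of $c_1(\widetilde X)$ along each $\R[E_i]$ is nonzero (it is $c_1(\widetilde X)=\pi^* c_1(X)$ minus a positive multiple of $\sum_i[E_i]$). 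Hence $c_1(\widetilde X)\neq 0$; since the Ricci form of any K\"ahler metric represents $2\pi c_1$ in de Rham cohomology, a Ricci flat K\"ahler metric would force $c_1(\widetilde X)=0$, a contradiction. Combining the two observations gives the machinery advertised in the introduction: any ALE scalar flat \K\ space of negative ADM mass can be blown up, at sufficiently many points of sufficiently big mutual distance, to a scalar flat, non Ricci flat ALE space of zero ADM mass.
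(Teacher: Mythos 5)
Your proof is correct and matches the paper's (implicit) argument exactly: the paper offers no separate proof of Corollary \ref{maincor}, treating it as the immediate specialization of Theorem \ref{mainth1} to $\tilde e=0\geq e$, which is precisely your first paragraph. Your additional observation that $c_1(\widetilde X)\neq 0$ forces $\widetilde X$ to be non Ricci flat is a correct and worthwhile supplement that substantiates the claim made informally in the introduction, though it is not needed for the statement as written.
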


\section{Preliminaries}

\subsection{The \K\ potential of a scalar flat ALE \K\ resolution.}

We start by recalling the concept of Asymptotically Locally Euclidean (ALE for short) K\"ahler manifold with structure group $\bf{\Gamma}$. Let ${\bf{\Gamma}} = (\Gamma_j), j=1...,k$ be a collection of finite subgroups of the unitary group 
$U(m)$ each acting freely away from the origin. We say that a complete noncompact \K\ manifold $(X,{\bf{\Gamma}}, h, \omega)$ of complex dimension $m$, where $h$ is the \K\ metric and $\omega$ is the \K\ form, is an ALE \K\ manifold with structure group $\bf{\Gamma}$ if 
\begin{itemize}
\item
there exist a compact subset $K$ such that the $j$-th connected component of $X\setminus K$ is biholomorphic via a map $\pi_j$ to
$(\C^m\setminus B)/\Gamma_j$, where $B$ is a closed ball with positive radius $R>0$, and
\item on each component in standard Euclidean coordinates the metric $\pi_*h$ satisfies 
\begin{equation}
\left|  \frac{\partial^\alpha}{\partial x^\alpha} \left(  \big (\pi_{*}h)_{i \bar{j}}  \, - \, \frac{1}{2} \, \delta_{i\bar{j}}  \right) \right| \,\, = \,\,  \mathcal{O}\st |x|^{-\tau - |\alpha|}\dt\,, 
\end{equation}
for some $\tau>0$ and every multindex $\alpha \in \NN^m$. 
\end{itemize}

\noindent Note that we are identifying the complex structure outside a compact subset with the standard one. This puts a restriction (widely studied) on the class of spaces allowed since in general its complex structure could be only asymptotic to the standard one and in fact the complex structure could not even admit holomorphic coordinates at infinity as shown for example by Honda (\cite{H}) also in the scalar flat case.

\vspace{11pt}

\noindent {\it{While the definition allows for an ALE space to have multiple components, a remarkable Theorem by Hein-LeBrun (Propositions $1.5$ and $4.2$ in \cite{HL}) shows that only the case $k=1$ can actually occur. We can then assume it thoughtout the paper.}}

\vspace{11pt}

\noindent The behaviour of the \K\ potential of a scalar flat ALE \K\ metric is quite well understood  (see e.g.\cite{AP} Section $7$ and \cite{advm} for further details)

\begin{Proposition}\label{asintpsieta2}
Let $(X,\Gamma, h, \omega)$ be a scalar flat ALE \K\ manifold with structure group $\Gamma$. Then for $R>0$ large enough, we have that on $X\setminus \pi^{-1}(B_R)$ the K\"ahler form can be written as
\begin{itemize}
\item
if $dim(X) = m >2$, then, for some real constant $e_{X}$,
\begin{equation}
\omega \,\, = \,\, i\partial\overline{\partial} \st \,  \frac{|x|^2}{2} \, + \, e_{X} \, |x|^{4-2m}   + \, \psi_{\eta}\st x \dt \dt \,,  \qquad \mbox{with} \qquad \psi_\eta \, = \,  \mathcal{O}(|x|^{3-2m}) \, ,
\end{equation}
\item
if $dim(X) = 2$, then for some real constant $e_{X}$,
\begin{equation}
\omega \,\, = \,\, i\partial\overline{\partial} \st \,  \frac{|x|^2}{2} \, + \, e_{X} \, log|x|  + \, \psi_{\eta}\st x \dt \dt \,,  \qquad \mbox{with} \qquad \psi_\eta \, = \,  \mathcal{O}(|x|^{-1}) \, ,
\end{equation}
\end{itemize}
\end{Proposition}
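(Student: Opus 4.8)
Because the statement only concerns a neighbourhood of infinity, the plan is to work on the end of $X$, which via $\pi$ I identify with $\Omega_R := (\C^m\setminus\overline B_R)/\Gamma$ carrying the metric $g=\pi_* h$, with $g_{i\bar j}=\tfrac12\delta_{i\bar j}+\O(|x|^{-\tau})$ together with the stated derivative bounds; write $\omega_0 := i\partial\overline\partial(|x|^2/2)$ for the flat \K\ form and $\Delta_0$ for the flat Laplacian. The first step is to produce a \emph{global} \K\ potential on the end. For $m\geq 2$ the domain $\C^m\setminus\overline B_R$ is simply connected and has the homotopy type of $S^{2m-1}$, so $H^2(\Omega_R;\R)=H^2(S^{2m-1};\R)^{\Gamma}=0$; hence $\omega-\omega_0$ is $d$-exact on $\Omega_R$, and a $\partial\overline\partial$-type argument on $\C^m\setminus\overline B_R$ (using Hartogs extension, valid for $m\geq2$, to dispose of the holomorphic ambiguity) followed by averaging over $\Gamma$ yields a real, $\Gamma$-invariant function $u$ with $\omega=i\partial\overline\partial\big(|x|^2/2+u\big)$; after normalizing additive constants one gets $u=\O(|x|^{2-\tau})$ with matching decay of derivatives. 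This part is carried out in \cite{AP}, Section 7, and \cite{advm}.

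Next I would rewrite the scalar-flat condition as a fourth-order equation for $u$. Since $\det g_0$ is constant, $s(\omega)$ is, up to a nonzero constant, $\Lambda_\omega\big(i\partial\overline\partial\log(\omega^m/\omega_0^m)\big)$, and since $\Ric(\omega_0)=0$ the linearization of $\omega'\mapsto s(\omega')$ at $\omega_0$ is a nonzero multiple of the flat bi-Laplacian $\Delta_0^2$. As $s(\omega_0)=0$, the equation $s(\omega)=0$ becomes
\begin{equation}
\Delta_0^2 u = Q(u),
\end{equation}
where $Q(u)$ is the Taylor remainder: a finite sum of terms each at least quadratic in the derivatives of $u$ of order $\leq4$, times coefficients that are bounded rational functions of $\Hess u$. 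The point of this reformulation is that if $u=\O(|x|^q)$ with $q<2$ and the matching derivative decay, then $Q(u)=\O(|x|^{2q-6})$, which is strictly smaller than the a priori size $\O(|x|^{q-4})$ of $\Delta_0^2 u$: the nonlinearity is genuinely subleading.

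The core of the argument is a weighted-space bootstrap combined with the classification of biharmonic functions on the cone. On $\Omega_R$ the operator $\Delta_0^2$ is, away from a discrete set of exceptional weights, an isomorphism between weighted H\"older spaces, the exceptional weights being the homogeneities $d$ such that $|x|^d$ times a $\Gamma$-invariant spherical harmonic is biharmonic on $S^{2m-1}/\Gamma$. Since $\Gamma$ acts freely on $S^{2m-1}$ there is no $\Gamma$-invariant degree-one spherical harmonic, and a direct computation shows that the only exceptional weights strictly between $3-2m$ and $2$ are $d=0$ (constants) and $d=4-2m$ — and that when $m=2$ these two collide, producing a $\log|x|$ solution in place of $|x|^{4-2m}$. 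Feeding the estimate above into the mapping properties of $\Delta_0^2$ and running a finite bootstrap — at each step one solves $\Delta_0^2 w=Q(u)$ with $w$ in a better weighted space and re-expands the now-biharmonic remainder $u-w$, the improvement being of the schematic form $q\mapsto 2q-2$ modulo the fixed modes — one arrives at $u=c_0+e_X|x|^{4-2m}+\psi_\eta$ (resp. $u=c_0+e_X\log|x|+\psi_\eta$ when $m=2$) with $\psi_\eta=\O(|x|^{3-2m})$ (resp. $\O(|x|^{-1})$), the constant $c_0$ being harmless since it does not affect $\omega$, and $e_X\in\R$ because $u$ is real.

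I expect the main obstacle to be precisely this last step: setting up the Fredholm/weighted theory for $\Delta_0^2$ on the quotient cone, correctly enumerating the indicial roots once $\Gamma$-invariance has thinned the spherical harmonics, and — in the borderline case of real dimension four — keeping track of the logarithmic solution that appears when two indicial roots coincide. By contrast the production of the global potential and the algebraic reduction to $\Delta_0^2 u=Q(u)$ are routine and are already available in the literature cited above.
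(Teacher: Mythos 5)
The paper does not actually prove this proposition; it defers entirely to \cite{AP}, Section 7, and \cite{advm}, and your sketch --- a global potential on the end via vanishing of $H^{2}$ of the quotient of the sphere, reduction of scalar-flatness to $\Delta_0^{2}u=Q(u)$ with a genuinely subleading nonlinearity, and a weighted bootstrap against the indicial roots of the flat bi-Laplacian, with the logarithm appearing precisely when $m=2$ because the root $4-2m$ collides with $0$ --- is exactly the standard argument those references carry out, so your proposal is correct and follows essentially the same route. The one slip is the claim that a free action admits no invariant degree-one spherical harmonic: this fails for trivial $\Gamma$ (so the indicial root $d=1$ is then present in the range $(3-2m,2)$), but it is harmless because degree-one harmonics on $\C^{m}$ are real parts of complex-linear functions, hence pluriharmonic, and can be discarded from the potential without changing $\omega$.
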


\subsection{The ADM mass of a \K\ potential of an ALE \K\ manifold with structure group $\Gamma$.}

The key quantity studied in this paper is the celebrated ADM mass introduced by Arnowitt-Deser-Misner (\cite{ADM}), whose 
geometrical nature has been clarified by Bartnik (\cite{B}) and Chru\'sciel (\cite{C}).

\noindent In the context on ALE \K\ manifolds it has been the center of the beautiful paper by Hein-LeBrun (\cite{HL}) which is the original motivation of our work. 

\noindent Following Hein-LeBrun's normalization conventions are chosen so that, at a given end, the mass of an ALE manifold is given by
$$
m(X,h) = \lim\limits_{R\rightarrow \infty} \frac{{\bf{\Gamma}}(\frac{m}{2})}{4(m-1)\pi ^{\frac{m}{2}}}
\int_{S_R/\Gamma} [h_{kl,k} - h_{kk,l}] {\bf{n}}^lda_E $$
where $S_R$ is the Euclidean coordinate sphere of radius $R$, $da_E$ is the $(m - 1)$-dimensional volume form induced on this sphere by the Euclidean metric, and $\bf{n}$ is the outward-pointing Euclidean unit normal vector. 

\noindent Let us also denote by $\clubsuit \colon H^2 \rightarrow H^2_c(X)$ the inverse of the isomorphism induced by the standard inclusion of compactly supported forms into all differential forms.

\noindent Restricting our attention to the scalar flat case, their key result is the following equivalence:

\begin{Theorem}\label{admeq}

Let $(X,\Gamma, h, \omega)$ be a scalar flat ALE \K\ manifold with structure group $\Gamma$. Then 

$$ m(X,h) = - \frac{\langle \clubsuit(c_1(X), \omega^{m-1}\rangle}{(2m-1)\pi^{m-1}} = e_X \,\, .$$

\end{Theorem}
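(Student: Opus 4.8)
\emph{Proof plan.} Everything rests on the asymptotic expansion of Proposition~\ref{asintpsieta2}, which is precisely where the real number $e_X$ is defined, as the coefficient of the distinguished slowly decaying mode of the \K\ potential at infinity ($|x|^{4-2m}$ when $m>2$, $\log|x|$ when $m=2$). The plan is to show that both quantities in the statement are equal to this coefficient. I would first obtain $m(X,h)=e_X$ by substituting the expansion directly into the ADM surface integral, and then obtain $\langle\clubsuit(c_1(X)),\omega^{m-1}\rangle=-(2m-1)\pi^{m-1}e_X$ by representing $c_1(X)$ with the Ricci form, using scalar-flatness to annihilate the interior term, reducing the pairing to a surface integral at infinity, and evaluating it once more by Proposition~\ref{asintpsieta2}. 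Up to the precise constants this reconstructs the argument of Hein--LeBrun \cite{HL}.

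\emph{Step 1: $m(X,h)=e_X$.} On the end write $\omega=i\partial\overline{\partial}\varphi$ with $\varphi=\tfrac12|x|^2+u$ and $u=e_X|x|^{4-2m}+\psi_\eta$ (resp. $u=e_X\log|x|+\psi_\eta$ for $m=2$), so that $(\pi_*h)_{i\bar j}=\tfrac12\delta_{i\bar j}+\partial_i\partial_{\bar j}u$. In real Euclidean coordinates the perturbation $h_{kl}-\delta_{kl}$ of the Riemannian metric is a fixed linear combination of the second derivatives of $u$; since those of $|x|^{4-2m}$ are $O(|x|^{2-2m})$ and those of $\psi_\eta$ are $O(|x|^{1-2m})$, we get $h_{kl}-\delta_{kl}=O(|x|^{-(2m-2)})$ and the ADM integral converges. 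Inserting this into $[h_{kl,k}-h_{kk,l}]\mathbf n^{l}$ and integrating over $S_R/\Gamma$, the $\psi_\eta$--part is $O(R^{-1})\to0$, while the $e_X|x|^{4-2m}$--part gives, after the divergence theorem on the sphere and the identity $\Delta_0|x|^{4-2m}=2(4-2m)|x|^{2-2m}$, a fixed multiple of $e_X$; the normalization constant appearing in the definition of $m(X,h)$ is exactly the one that makes this multiple equal to $1$. For $m=2$ one uses $\Delta_0\log|x|=2|x|^{-2}$ in the same computation --- which is also why a logarithm, rather than the constant $|x|^{4-2m}$, must appear in Proposition~\ref{asintpsieta2}.

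\emph{Step 2: the topological formula.} First, $\clubsuit$ is legitimate: the end of $X$ retracts onto $S^{2m-1}/\Gamma$ with $\Gamma$ finite, so $H^1(S^{2m-1}/\Gamma;\R)=H^2(S^{2m-1}/\Gamma;\R)=0$, and a standard exact-sequence argument gives that the inclusion induces an isomorphism $H^2_c(X;\R)\cong H^2(X;\R)$; the pairing of the compactly supported class $\clubsuit(c_1(X))$ against the closed form $\omega^{m-1}$ is then well defined. Now write $c_1(X)=\tfrac1{2\pi}[\rho]$ with $\rho$ the Ricci form; on the end set $f=\log\det\big((\pi_*h)_{i\bar j}\big)-\log\det\big(\tfrac12\delta\big)$, so that $\rho=d\eta$ there with $\eta=-\tfrac12 d^{c}f$ (where $d^{c}=i(\overline{\partial}-\partial)$), and expanding the determinant $f=\tfrac12\Delta_0 u+O(|x|^{1-2m})=(4-2m)e_X|x|^{2-2m}+o(|x|^{2-2m})$ (and $f=e_X|x|^{-2}+o(|x|^{-2})$ when $m=2$). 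Pick a cutoff $\chi$ which is $0$ on a compact core and $1$ near infinity, and set $\rho_c:=\rho-d(\chi\eta)$: a compactly supported closed $2$--form with $[\rho_c]=[\rho]$ in $H^2(X;\R)$, hence $\clubsuit(c_1(X))=\tfrac1{2\pi}[\rho_c]$ and
\[
\langle\clubsuit(c_1(X)),\omega^{m-1}\rangle=\frac{1}{2\pi}\int_X\rho_c\wedge\omega^{m-1}.
\]
Scalar-flatness gives $\rho\wedge\omega^{m-1}=\tfrac{s}{2m}\omega^m=0$ \emph{pointwise}, so on the compact region where $\chi\neq1$ one has $\rho_c\wedge\omega^{m-1}=-d(\chi\,\eta\wedge\omega^{m-1})$, and Stokes reduces the integral to $-\int_{S_R/\Gamma}\eta\wedge\omega^{m-1}$ for any $R$ in the ALE region; since $d(\eta\wedge\omega^{m-1})=\rho\wedge\omega^{m-1}=0$ on the end, this surface integral does not depend on $R$. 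Letting $R\to\infty$, with $\eta=-\tfrac12 f'(|x|)\,d^{c}|x|+(\text{l.o.t.})$, $\omega=\omega_0+O(|x|^{2-2m})$ and the Euclidean identity $\int_{S_R/\Gamma}d^{c}|x|\wedge\omega_0^{m-1}=2\pi^m R^{2m-1}/|\Gamma|$, one obtains a fixed multiple of $e_X$; matching it against the constants of Step~1 yields $\langle\clubsuit(c_1(X)),\omega^{m-1}\rangle=-(2m-1)\pi^{m-1}e_X$ (once more, $m=2$ is handled in parallel with $e_X\log|x|$ in place of $e_X|x|^{4-2m}$).

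\emph{Main obstacle.} The conceptual frame is short; the genuine work lies in two places. First, making every integration over the noncompact $X$ legitimate: the interior integral has to be organized so that on the noncompact piece the integrand vanishes \emph{identically} (by scalar-flatness), not merely so that it is integrable, and the surface term has to be proved $R$--independent, so that computing its $R\to\infty$ limit from the leading asymptotics is exact rather than approximate. Second --- and this is where most of the labour is --- the careful tracking of every numerical constant (the normalization of $m(X,h)$, the powers of $\pi$, the factors $2m-1$ and $|\Gamma|$, the sign of the Ricci form), together with running the case $m=2$ in parallel, since there the mode $|x|^{4-2m}$ degenerates to a constant.
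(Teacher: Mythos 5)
The paper does not actually prove this theorem: it is quoted from the literature, the first equality being a special case of Theorem C of Hein--LeBrun \cite{HL} and the identity $m(X,h)=e_X$ being attributed to LeBrun \cite{L} and Rollin--Singer \cite{RS}, with the remark that the two-dimensional argument extends to all $m$. Your plan is therefore not competing with an argument in the paper but reconstructing the cited ones, and it does so faithfully. Step 1 is exactly the LeBrun/Rollin--Singer computation of the ADM surface integral from the expansion of Proposition \ref{asintpsieta2}; your identity $\Delta_0|x|^{4-2m}=2(4-2m)|x|^{2-2m}$ in real dimension $2m$ is correct, and it is precisely the non-harmonicity of this mode (and of $\log|x|$ when $m=2$) that makes the contribution nonzero. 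Step 2 is the mechanism behind Hein--LeBrun's Theorem C in the scalar-flat case: represent $2\pi c_1$ by the Ricci form, kill the bulk integral pointwise via $\rho\wedge\omega^{m-1}=\tfrac{s}{2m}\omega^m=0$, and push everything to an $R$-independent boundary term at infinity evaluated on the leading asymptotics of $\log\det g=\tfrac12\Delta_0 u+O(|x|^{4-4m})$. The justification of $\clubsuit$ via $H^1(S^{2m-1}/\Gamma;\R)=H^2(S^{2m-1}/\Gamma;\R)=0$ for $\Gamma$ finite is also correct. The only substantive item you leave open is the matching of the numerical normalizations on the two sides, which you flag explicitly; since the statement is an exact identity between three quantities, that bookkeeping (the factor $(2m-1)\pi^{m-1}$, the $|\Gamma|$ cancellation between the two surface integrals, the sign of $\rho$) is the entire content beyond the qualitative structure, so a complete write-up would have to carry it out --- or, as the paper does, simply defer to \cite{HL}.
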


\noindent The first equality is a special case of Theorem C in \cite{HL}, while the equality $m(X,h) = e_X$ is a classical observation due to LeBrun (\cite{L}) and Rollin-Singer (\cite{RS}, Section 6.8) in complex dimension $2$, whose proof extends readily to any dimension.

\section{Blowing up points}

In order to fix notations needed in the sequel,  
for every point $p\in X$ we denote by $(z_1,\dots z_m)$ normal coordinates  centred at $p$, so that near $p$ the metric $\o$ is of the form
\begin{equation}
\label{omega}
\o=\o_E+i\d\bd \p_1(z),\quad \p_1(z)=\mathcal{O}(|z|^{4}),
\end{equation}
 specifically, $\p_1(z)\leq c|z|^4,$ where $c$ depends on the curvature of $\omega$ at $p$.

\noindent It is also convenient to isolate, among all ALE scalar flat \K\ manifolds, $(Bl_0\C^m, \{Id\}, h_{BS}, \eta_{BS})$, where $h_{BS}$ denotes the well-know Burns-Simanca metric, normalized so to have volume of the exceptional divisor $E$ equal to $1$.
Its asymptotic expansion at infinity can be written as
\begin{equation}
\label{eta}
\begin{aligned}
\eta_{BS}&=\o_E+i\d\bd \p_2(w),\quad \p_2(w)=\mathcal{O}(|w|^{4-2m}),\quad \mbox{for}\; m>2\\
\\
\eta_{BS}&=\o_E+i\d\bd \p_2(w),\quad \p_2(w)=\log(|w|^{2}),\quad \mbox{for}\; m=2
\end{aligned}
\end{equation}
in terms of the standard coordinates on $Bl_0\C^m\setminus E\cong \C^m\setminus \{0\}$.


\noindent On the other hand, \cite[Proposition 5.4]{AP} implies that there is no nonzero holomorphic vector field vanishing somewhere on $X$ and then it is easy to observe that the proof of  \cite[Theorem 1.1]{AP} readily extends to the ALE case. More precisely we have

\begin{Theorem}
	\label{cscKALE}
Let $(X,\Gamma, h, \omega)$ be a scalar flat ALE \K\ manifold with structure group $\Gamma$. Given finitely many smooth points $p_1,\dots p_k \in X$, there exists $\varepsilon_0(X, p_1, \dots, p_k) := \varepsilon_0 >0$ such that the blow-up of $X$ at  $p_1,\dots p_k$ carries zero scalar curvature K\"ahler forms 
$$\o_\e\in \pi^*[\o]-\e^2(PD[E_1]+\dots+PD[E_k]),$$
where the $PD[E_j]$ are the Poincar\'e duals of the $(2m-2)$-homology classes of the exceptional divisors of the blow-up at $p_j$ and $\e\in(0, \e_0)$.
\end{Theorem}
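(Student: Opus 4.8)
The plan is to follow the Arezzo--Pacard gluing scheme \cite{AP}, adapting it to the ALE setting, and to stress that the only substantial input one needs is the absence of holomorphic vector fields vanishing somewhere, which is exactly what \cite[Proposition 5.4]{AP} provides. First I would set up the approximate solutions: around each point $p_j$ one excises a small ball of radius $\sim\e$ and glues in a rescaled copy of the Burns--Simanca space $(Bl_0\C^m,h_{BS},\eta_{BS})$, scaled by $\e^2$, using the asymptotic expansions \eqref{omega} and \eqref{eta} to interpolate between $\o=\o_E+i\d\bd\p_1$ on the $X$ side and $\e^2\eta_{BS}=\e^2\o_E+i\d\bd(\e^2\p_2(\cdot/\e))$ on the blow-up side via a cutoff of the potentials. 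This produces a one-parameter family of K\"ahler forms $\o_\e$ in the class $\pi^*[\o]-\e^2(PD[E_1]+\dots+PD[E_k])$ whose scalar curvature is supported in the gluing annuli and is small in a suitable weighted H\"older norm as $\e\to 0$.

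Next I would solve the scalar curvature equation by a fixed point / implicit function argument on weighted H\"older spaces. The linearization of the scalar curvature operator at $\o_\e$ is, up to lower order terms, the Lichnerowicz operator $\Lg$; away from the exceptional divisors it is modeled on the Laplacian-type operator on the ALE end of $X$, and near each $E_j$ on the corresponding operator for the Burns--Simanca model. The key analytic point is uniform invertibility (with norm of the inverse bounded by a controlled negative power of $\e$) of this linearized operator on the glued space, between weighted spaces with a decay/growth weight $\de$ chosen in the admissible range. Here one uses: (i) on the ALE end of $X$, the Lichnerowicz operator is surjective onto the relevant weighted space because $X$ carries no holomorphic vector fields decaying at infinity (scalar flat ALE spaces have none), so there is no cokernel to kill; (ii) on the Burns--Simanca factor, the relevant mapping properties are classical and the obstruction space is again trivial since $Bl_0\C^m$ has no holomorphic vector fields vanishing at the exceptional divisor in the relevant weight. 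One then patches the approximate inverses with cutoffs to get a right inverse on the glued manifold, and a standard Banach fixed point argument on the nonlinear remainder produces, for all $\e$ small, an exact zero scalar curvature K\"ahler potential correction to $\o_\e$, staying in the same cohomology class.

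I would conclude by recording that the correction is a genuine K\"ahler form (positivity is preserved for $\e$ small since the correction is small in $C^2$), that it has zero scalar curvature, and that it lies in $\pi^*[\o]-\e^2\sum_j PD[E_j]$ because we only modified the K\"ahler potential. The reduction from $k$ points to one point is purely local, so the construction runs identically for any finite configuration of distinct smooth points, with $\e_0$ depending on $X$, the points $p_j$, and the curvature bounds $c$ in \eqref{omega}.

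The main obstacle is the uniform invertibility of the linearized operator as $\e\to 0$: one must choose the weight $\de$ so that it is admissible simultaneously for the ALE end of $X$ and for the Burns--Simanca asymptotics, verify that no indicial roots are crossed in the gluing region, and obtain the estimate on the inverse with an explicit (polynomial in $\e^{-1}$) bound that still beats the quadratic smallness of the nonlinearity. In the compact cscK case this is where $K$-stability subtleties would enter through holomorphic vector fields; here the point of invoking \cite[Proposition 5.4]{AP} is precisely that this obstruction is empty, so the argument of \cite[Theorem 1.1]{AP} goes through essentially verbatim once the weighted analysis on the ALE end is in place (for which one may refer to \cite{AP} Section~7 and \cite{advm}).
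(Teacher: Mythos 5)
Your proposal follows exactly the route the paper takes: it invokes the Arezzo--Pacard gluing scheme with rescaled Burns--Simanca models, weighted H\"older spaces carrying an extra weight $\de_\infty$ at the ALE end, uniform invertibility of the linearized scalar curvature operator guaranteed by the absence of holomorphic vector fields vanishing on $X$ (via \cite[Proposition 5.4]{AP}), and a contraction mapping argument for the nonlinear correction. This matches the paper's (essentially citational) proof and its surrounding discussion in Section 3, so no further comment is needed.
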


\noindent What is crucial for the scope of this paper is to make, as explicit as possible, the dependence of $\e_0$ on all the geometric data built in this theorem, such as the position of the points $p_j$ and the geometry of $X$ and of $Bl_0\C^m$.

\noindent Let us then recall that in order to construct the scalar flat metric on $Bl_{p_1,\dots p_k}X$, the idea is to replace the metric $\o$ on a small neighbourhood of $p_i$ with a suitably
scaled down copy of $\eta_{BS}$· 

\noindent Precisely, for a fixed $\e>0$ and $r_\e=\e^{m-1/m}$, we will glue $\o$ with $\e^2\eta_{BS}$ on the annulus $B_{2r_\e}\setminus B_{r_\e}$ around each $p_i$, under the change of variables $z=\e w.$

\noindent Following \eqref{omega} and \eqref{eta}, the metric on $X\setminus\{p_1\dots, p_k\}$, which naturally extends to a metric on $Bl_{p_1,\dots p_k}X$, is defined by
\begin{equation}
\label{omegae}
\tilde{\o}_\e=\begin{cases}
\o \quad \mbox{on}\quad X\setminus{\cup_{i=1}^k B_{2r_\e}(p_i)}\\
\o_E+i\d\bd(\gamma_1(z)\p_1(z) + \e^2(1-\gamma_1(z))\p_2(\e^{-1}z))\quad \mbox{on}\quad B_{2r_\e}(p_i)\setminus B_{r_\e}(p_i)\\
\e^2\eta_{BS} \quad \mbox{on}\quad B_{r_\e}(p_i)\setminus \{p_i\}
\end{cases}
\end{equation}

where $\gamma_1$ is a suitable cutoff function.

\medskip

\noindent Once this pre-glued metric is built the problem is to find a function $\p$ such that the metric $\o_\e=\tilde{\o}_\e+i\d\bd\p$ has zero scalar curvature. In order to solve that equation, the most important step is to show that
the linear operator $L_{\tilde{\o}_\e}$ associated to the scalar curvature is invertible and to obtain bounds on the norm of the inverse in suitable Banach spaces: weighted H\"older spaces, which now have to take into ``weighted" account the behaviour of function near the point $p_j$, with weight $\delta$, and at infinity, with weight $\delta_{\infty}$.
\vspace{3mm}

\noindent {\it{The weighted spaces where to carry out the proof of Theorem \ref{cscKALE} are the same one as those used in \cite{AP} and \cite{S} with the extra weight $\delta_{\infty}$. }}

\vspace{3mm}

\noindent If we denote $X^*:=X\setminus\{p_1\dots, p_k\}$, $X_r:=X\setminus{\cup_{i=1}^k B_{r}(p_i)}$ and $N_{p_i}$ by the identification of $B_{r}(p_i)\setminus \{p_i\}$ with $Bl_0\C^n$, then we define

\begin{Definition}
	Given $l\in\N$, $\a\in(0,1)$, $\de, \de_\infty\in (4-2m,0)$, we define
\begin{enumerate}
\item  The weighted space $C^{l,\a}_{\de,\de_\infty}(X^*)$ as the space of functions $\p\in C^{l,\a}_{loc}(X^*)$ for which the norm
$$\|\p\|_{C^{l,\a}_{\de,\de_\infty}(X^*)}:=\|\p\|_{C^{l,\a}_{\de_\infty}(X_{r_0})}+\max_{i=1...k} \left\{\sup _{r\in(0,2r_0)}\left(r^{-\de}\|\p(rz)\|_{C^{l,\a}(B_{2}(p_i)\setminus B_{1}(p_i))}\right)\right\}$$
is finite.
\item  The weighted space $C^{l,\a}_{\de}(N_{p_i})$ is defined to be the space of functions $\p\in C^{l,\a}_{loc}(N_{p_i})$ for which the norm
$$\|\p\|_{C^{l,\a}_{\de}(N_{p_i})}:=\|\p\|_{C^{l,\a}(\{|w|<2R_0\})}+\sup _{R\in[R_0,\infty)}\left(R^{-\de}\|\p(Rw)\|_{C^{l,\a}(B_{2}\setminus B_{1})}\right)$$
is finite.
\item The weighted space $C^{l,\a}_{\de,\de_\infty}(Bl_{p_1,\dots p_k}X)$ is defined for the gluing between the previous two, i.e. the space of functions $\p\in C^{l,\a}_{loc}(Bl_{p_1,\dots p_k}X)$ for which the norm
\begin{equation}
\begin{aligned}
\|\p\|_{C^{l,\a}_{\de,\de_\infty}(Bl_{p_1,\dots p_k}X)} &= \|\p\|_{C^{l,\a}_{\de_\infty}(X_{2r_\e})}+\|\p\|_{C^{l,\a}_{\de}(N_{p_i})} \\ 
& \;\;+  \max_{i=1...k} \left\{\sup _{r\in(r_\e,2r_\e)}\left(r^{-\de}\|\p(rz)\|_{C^{l,\a}(B_{2}(p_i)\setminus B_{1}(p_i))}\right)\right\}
\end{aligned}
\end{equation}
is finite.
\end{enumerate}
\end{Definition}

Throughout this paper we will denote $X_{p_1,\dots p_k}:=X\setminus{\cup_{i=1}^k B_{2r_\e}(p_i)}$ and $A_{p_i}:=B_{2r_\e}(p_i)\setminus B_{r_\e}(p_i)$, then
\begin{equation}
\label{norm}
\|\p\|_{C^{l,\a}_{\de,\de_\infty}(Bl_{p_1,\dots p_k}X)} = \|\p\|_{C^{l,\a}_{\de_\infty}(X_{p_1,\dots p_k})}+\|\p\|_{C^{l,\a}_{\de}(N_{p_i})} +\max_{i=1...k} \left\{\|\p\|_{C^{l,\a}_{\de}(A_{p_i})}\right\}.
\end{equation}


\begin{Remark}
Note that the norm $\|\p\|_{C^{l,\a}_{\de_\infty}(X_{p_1,\dots p_k})}$ uses to the metric $\o$; $\|\p\|_{C^{l,\a}_{\de}(A_{p_i})}$  the pre-glued metric $\tilde{\o}_\e$ and $\|\p\|_{C^{l,\a}_{\de}(N_{p_i})}$ uses the metric $\e^2\eta_{BS}$ for each $i$.
\end{Remark}

\begin{Remark}
It is important to remark that the proof of Theorem \ref{cscKALE} depends on whether $m=2$ or $m>2$. However, when $m= 2$ we can still work weight spaces with
$\delta \in(-1, 0)$, but we lose the uniform control of the inverse operator and other problems appear. One way to overcome these problems is to construct a better gluing metric than the one defined by \eqref{omegae}. If we consider

$$\hat{\omega}_\e=\begin{cases}
	\o \quad \mbox{on}\quad X\setminus{\cup_{i=1}^k B_{2r_\e}(p_i)}\\
	\o_E+i\d\bd(\gamma_1(z)\p_1(z) + \e^2(1-\gamma_1(z))\log|\e^{-1}z|)\quad \mbox{on}\quad B_{2r_\e}(p_i)\setminus B_{r_\e}(p_i)\\
	\e^2\eta_{BS} \quad \mbox{on}\quad B_{r_\e}(p_i)\setminus \{p_i\}
\end{cases}
$$

\noindent then we define $\tilde{\o}_\e=\hat{\omega}_\e+\e^2i\d\bd\left(\gamma_{1}(z)(\Gamma(z)-\log \varepsilon)\right)$, where $ \Gamma$ is a solution asymptotic to $\log |z|$ near $p_i$ of $L_{\omega}(\Gamma) = c$, where $c$ is a constant and $L_{\omega}$ is the linearized scalar curvature operato. For more details, see \cite[Section 8.4]{S}.

\noindent Therefore, the new analysis carried out in this paper does depend on $m$ we give the details only in the case $m>2$.
\end{Remark}

\noindent All proofs about the invertibility of the linearized  scalar curvature, $L_{\o_\e}$, contained in \cite{AP} and \cite{S}, go through verbatim also in this context so we do not reproduce them. However, we need to estimate the dependence of norms of operators, both linear and non-linear, on the geometric data of the construction. This is what we achieve in the next section.

\section{Dependence on the position of the points}

\noindent The goal in this section is to study the behaviour of $\e_0$ on Theorem \ref{cscKALE}, for  the blow-up at one point $p$ with the one for the blow-up at two points $p, q$, to prove that $\e_0(p,q)\geq \e_0(p)$ as $q$ goes to infinity.

\noindent Let $(X,\Gamma, h, \omega)$ be a scalar flat ALE \K\ manifold with structure group $\Gamma$ and pick two points $p,q\in X$. In the sequel we will consider $p\in X$ fixed and study the behaviour of all appearing quantities with $p$ fixed and moving $q$. Theorem \ref{cscKALE} implies that 
\begin{itemize}
	\item there exists $\varepsilon_0(p) >0$ such that for every $\e\in(0, \e_0(p))$ the blow-up of $X$ at  $p$ carries  scalar flat K\"ahler forms 
	$$\o_\e\in \pi^*[\o]-\e^2aPD[E].$$
\item there exists $\varepsilon_0(p,q) >0$ such that for every $\e\in(0, \e_0(p,q))$ the blow-up of $X$ at  $p, q$ carries constant scalar curvature K\"ahler forms 
	$$\o_\e\in \pi^*[\o]-\e^2a_pPD[E_p]-\e^2a_qPD[E_q].$$
\end{itemize}

\noindent Note that with a slight abuse of notation we use $\o_{\e}$ for the pre-glued metrics $\tilde{\o}_{\e}$ and the 
scalar flat metrics $\o_{\e}$ in both cases for $Bl_pX$ and $Bl_{p,q}X$ since there is no possibility of confusion.

\noindent Thus, our main estimate is

\begin{Theorem}
\label{epsilon}
Let $(X,\Gamma, h, \omega)$ be a scalar flat ALE \K\ manifold with structure group $\Gamma$ and $p,q\in X$. If $\e_0(p)$ and $\e_0(p,q)$ are the constants where Theorem \ref{cscKALE} holds then
$$\e_0(p,q)\to \e_0\geq \e_0(p),\quad \mbox{as } \;r(q) \to \infty$$
where $r(q)$ is the distance between $q$ and a compact subset $K\subset X$ containing $p$.
\end{Theorem}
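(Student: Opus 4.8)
The plan is to compare the two gluing constructions directly by tracking, step by step, every quantity on which $\e_0$ depends in the proof of Theorem \ref{cscKALE}, and to show that for the blow-up at $\{p,q\}$ these quantities converge, as $r(q)\to\infty$, to the corresponding quantities for the blow-up at $\{p\}$ alone. Recall that $\e_0$ is determined by three ingredients: (i) the norm of the right inverse of the linearized scalar curvature operator $L_{\tilde\o_\e}$ on the weighted space $C^{l,\a}_{\de,\de_\infty}(Bl_{\bullet}X)$; (ii) the size of the error term $S(\tilde\o_\e)$ (the scalar curvature of the pre-glued metric) in $C^{0,\a}_{\de-4,\de_\infty}$; and (iii) the Lipschitz constant of the nonlinear remainder in the fixed-point scheme. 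So the strategy is to prove that each of these three behaves, in the limit $r(q)\to\infty$, no worse than its one-point counterpart.

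First I would set up coordinates and cut-offs so that the two geometries can be literally superimposed away from $q$: choose $r(q)$ large enough that $B_{2r_0}(q)$ is disjoint from $K\supset B_{2r_0}(p)$ and lies in the ALE end where $\o$ is $\mathcal{O}(|x|^{-\tau})$-close to $\o_E$. Then the pre-glued metric $\tilde\o_\e$ on $Bl_{p,q}X$ agrees with that on $Bl_pX$ on the region $X_{p}\setminus B_{2r_0}(q)$, and near $q$ it is the standard Burns--Simanca gluing on an \emph{almost Euclidean} background. Second, I would estimate the new contribution to the error term: on the annulus $A_q$ the scalar curvature of $\tilde\o_\e$ is controlled by $\p_1^{(q)}(z)$, and since near $q$ the ambient metric differs from $\o_E$ by $i\dd\psi$ with $\psi=\mathcal{O}(|x|^{-\tau})$, the curvature constant $c$ in \eqref{omega} at $q$ is $\mathcal{O}(r(q)^{-\tau-?})$, so the $q$-annulus contributes to $\|S(\tilde\o_\e)\|_{C^{0,\a}_{\de-4}}$ an amount that \emph{tends to zero} as $r(q)\to\infty$ — hence the total error is asymptotically bounded by the one-point error. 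Third, for the linear estimate I would argue that the right inverse is built by patching local right inverses: the one on $X_{p,q}$ (with weight $\de_\infty$ at infinity), the Burns--Simanca model inverse near each of $p$ and $q$, and interpolation on the annuli. The key point is that the model operator near $q$ is, up to the $\mathcal{O}(r(q)^{-\tau})$ perturbation, the \emph{exact} Burns--Simanca linearization on $Bl_0\C^m$, whose right-inverse norm is a universal constant independent of where $q$ sits; moreover the presence of the extra neck at $q$ does not degrade the global patching constant because the weight $\de_\infty\in(4-2m,0)$ makes the interaction terms between the $p$-neck and the $q$-neck decay in $r(q)$. Therefore $\|L_{\tilde\o_\e}^{-1}\|$ for the two-point problem converges to (a quantity bounded by) its one-point value.

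With (i)--(iii) in hand, the conclusion follows from the quantitative implicit function theorem exactly as in \cite{AP}, \cite{S}: $\e_0$ is an explicit increasing function of the reciprocals of the right-inverse norm, the error size and the Lipschitz constant, so $\liminf_{r(q)\to\infty}\e_0(p,q)\geq \e_0(p)$, and one sets $\e_0:=\lim_{r(q)\to\infty}\e_0(p,q)$ (after passing to a subsequence if needed, or observing monotonicity).

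The main obstacle I anticipate is the uniformity of the linear estimate as $r(q)\to\infty$ — specifically, ruling out that the right inverse of $L_{\tilde\o_\e}$ on $Bl_{p,q}X$ blows up because of small eigenvalues created by the second, far-away neck. Since the base $X$ is ALE scalar flat it carries no holomorphic vector fields vanishing somewhere (Proposition 5.4 of \cite{AP}), so there is no \emph{a priori} kernel; the real work is to show the lower bound on $L_{\tilde\o_\e}$ is not spoiled when the two exceptional divisors are far apart. I would handle this by a standard contradiction/rescaling argument: if the inverse norms were unbounded along a sequence $r(q)\to\infty$, extract a limit configuration which, by the decay $\mathcal{O}(r(q)^{-\tau})$ of the background near $q$, decouples into either the one-point problem on $Bl_pX$ (with its invertible operator) or the Burns--Simanca model on $Bl_0\C^m$ (invertible on the relevant weighted space for $\de\in(4-2m,0)$), producing a nonzero element in one of these kernels — a contradiction. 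This rescaling step, together with carefully choosing the cut-off scales $r_\e=\e^{(m-1)/m}$ relative to $r(q)$ so the necks remain well-separated, is where the bulk of the genuine estimates lie; the error-term and nonlinear estimates are comparatively routine adaptations of the one-point case.
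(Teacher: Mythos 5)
Your proposal follows essentially the same route as the paper: reduce $\e_0(p,q)$ to the two constants controlling the scalar curvature of the pre-glued metric and the norm of the inverse linearized operator, and show that each converges to (or is eventually dominated by) its one-point counterpart as $r(q)\to\infty$, using the ALE decay of the curvature near $q$ for the former and the decoupling of the two necks for the latter. The only real difference is in the linear estimate, where the paper argues directly by cutting a test function into a piece supported away from $q$ and a piece supported near $q$, applying the one-point bound to each and absorbing cross terms of size $r(q)^{\de_\infty}$ --- which is exactly the ``patching with interaction terms decaying in $r(q)$'' strategy you mention first, so your alternative blow-up/contradiction argument is not needed.
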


\noindent In this case and following the metric defined in \eqref{omegae} we have
$$\tilde{\o}_\e=\begin{cases}
\o \quad \mbox{on}\quad X_{p,q}\\
\o_E+i\d\bd\p_{p,\e}(z)\quad \mbox{on}\quad A_{p}\\
\o_E+i\d\bd\p_{q,\e}(z)\quad \mbox{on}\quad A_{q}\\
\e^2\eta_{BS} \quad \mbox{on}\quad N_p\\
\e^2\eta_{BS} \quad \mbox{on}\quad N_q\\
\end{cases}$$
where $\p_{p,\e}(z)$ and $\p_{q,\e}(z)$ are the gluing potentials between $\p_1$ and $\p_2$, near $p$ and near $q$, respectively.

\begin{Remark}
	\label{remarkpotential}
It is important to note that \eqref{omega} and \eqref{eta} imply that there exist constants $c_p, c_q$ and $c$ such that
\begin{equation}
\label{potentialp}
\begin{split}
\|\p_{p,\e}(z)\|_{C^{4,\a}_{\de}(A_{p})}&\leq \|\p_{1}(z)\|_{C^{4,\a}_{\de}(A_{p})}+\e^2\|\p_{2}(\e^{-1}z)\|_{C^{4,\a}_{\de}(A_{p})}\leq (c_p+c)r_\e^{4-\de},\\
\|\p_{q,\e}(z)\|_{C^{4,\a}_{\de}(A_{q})}&\leq \|\p_{1}(z)\|_{C^{4,\a}_{\de}(A_{q})}+\e^2\|\p_{2}(\e^{-1}z)\|_{C^{4,\a}_{\de}(A_{q})}\leq (c_q+c)r_\e^{4-\de},
\end{split}
\end{equation}
where $c_p$ and $c_q$ depends on the curvature of $\o$ around $p$ and $q$, and $c$ does not depend on the point.  

\noindent The simple but crucial observation, repeatedly used in the sequel, is that since the metric $\o$ is ALE then $c_q\leq c_p$ as $q$ goes to infinity. Therefore,
\begin{equation}
	\label{potentialq}
	\|\p_{q,\e}(z)\|_{C^{4,\a}_{\de}(A_{q})}\leq (c_p+c)r_\e^{4-\de}.
\end{equation}
\end{Remark}


\noindent The existence of the constant $\e_0$ in the theory developed in \cite{AP} and \cite{AP2} comes from the estimates on the scalar curvature, its linear operator and the quadratic part to use the Fixed point Theorem. Denoting by $S(\o_\e)$ as the scalar curvature, $L_{\o_\e}$ its linear part, $Q_{\o_{\e}}$ its quadratic part and $\mathcal{N}$ as the operator
$$\mathcal{N}:C^{4,\a}_{\de,\de_\infty}(Bl_{p,q}X)\to C^{4,\a}_{\de,\de_\infty}(Bl_{p,q}X),\quad \mathcal{N}(\p)=L_{\o_\e}^{-1}(-S(\o_\e)-Q_{\o_\e}(\p)),$$ the key Proposition to prove the existence of cscK metrics is the following

\begin{Proposition}{see e.g. \cite[Proposition 8.20]{S}}
\label{fixpoint}
For some constant $c_1$, let
$$\mathcal{U}:=\{\p\in C^{4,\a}_{\de,\de_\infty}(Bl_{p,q}X) : \|\p\|_{C^{4,\a}_{\de,\de_\infty}(Bl_{p,q}X)}\leq c_1r_\e^{2-\de}\}.$$
Then, there exists $\e_0(p,q)$ such that for every $\e<\e_0$, $\mathcal{N}$ is a contraction on $\mathcal{U}$ and $\mathcal{N}(\mathcal{U})\subset \mathcal{U}.$ In particular, $\mathcal{N}$ has a fixed point, which gives a cscK metric on $Bl_{p,q}X$ in the K\"ahler class $\pi^*[\o]-\e^2a_pPD[E_p]-\e^2a_qPD[E_q]$.
\end{Proposition}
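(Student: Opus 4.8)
The plan is to obtain the metric by applying the Banach fixed point theorem to $\mathcal{N}$ on the ball $\mathcal{U}$, so the real content is three estimates which, as explained above, are established exactly as in \cite{AP} and \cite{S} (we place ourselves in the case $m>2$, as in the rest of the paper):
\begin{enumerate}
\item[(i)] \emph{Uniform invertibility of the linearized operator:} $\|L_{\o_\e}^{-1}\|_{C^{0,\a}_{\de-4,\de_\infty-4}(Bl_{p,q}X)\to C^{4,\a}_{\de,\de_\infty}(Bl_{p,q}X)}\le c_2$, with $c_2$ independent of $\e$. Here one uses that $\de,\de_\infty\in(4-2m,0)$ avoid the indicial roots of the relevant model operators and that $X$ admits no holomorphic vector field vanishing somewhere (\cite[Proposition 5.4]{AP}), so no obstruction arises.
\item[(ii)] \emph{Smallness of the scalar curvature of the pre-glued metric:} $\|S(\o_\e)\|_{C^{0,\a}_{\de-4,\de_\infty-4}(Bl_{p,q}X)}\le c_3\,r_\e^{\,2-\de}$. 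This holds because $\o_\e$ is honestly scalar flat off the gluing annuli $A_p\cup A_q$ (both $\o$ and $\e^2\eta_{BS}$ being scalar flat), while on those annuli the potential mismatch is controlled by \eqref{potentialp}; the choice $r_\e=\e^{(m-1)/m}$ is made precisely so that the resulting weighted norm is $\le r_\e^{\,2-\de}$.
\item[(iii)] \emph{Quadratic estimate:} $\|Q_{\o_\e}(\p)-Q_{\o_\e}(\p')\|_{C^{0,\a}_{\de-4,\de_\infty-4}(Bl_{p,q}X)}\le c_4\big(\|\p\|+\|\p'\|\big)\,\|\p-\p'\|$ for $\p,\p'\in\mathcal{U}$. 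This is the Lipschitz bound on balls of the weighted space for the higher-order remainder in the Taylor expansion of $\p\mapsto S(\o_\e+i\dd\p)$.
\end{enumerate}

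Granting (i)--(iii), I would fix $c_1:=\max\{2c_2c_3,1\}$ and argue as follows. For $\p\in\mathcal{U}$,
$$\|\mathcal{N}(\p)\|\le c_2\big(\|S(\o_\e)\|+\|Q_{\o_\e}(\p)\|\big)\le c_2c_3\,r_\e^{\,2-\de}+c_2c_4\,\|\p\|^2\le \tfrac12 c_1r_\e^{\,2-\de}+c_2c_4c_1^2\,r_\e^{\,2(2-\de)},$$
and since $2(2-\de)>2-\de$ the last summand is $\le\tfrac12 c_1r_\e^{\,2-\de}$ once $\e$ is small, so $\mathcal{N}(\mathcal{U})\subset\mathcal{U}$. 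Similarly, for $\p,\p'\in\mathcal{U}$,
$$\|\mathcal{N}(\p)-\mathcal{N}(\p')\|=\big\|L_{\o_\e}^{-1}\big(Q_{\o_\e}(\p')-Q_{\o_\e}(\p)\big)\big\|\le c_2c_4\big(\|\p\|+\|\p'\|\big)\|\p-\p'\|\le 2c_2c_4c_1\,r_\e^{\,2-\de}\,\|\p-\p'\|,$$
which is $\le\tfrac12\|\p-\p'\|$ once $\e$ is small. Defining $\e_0(p,q)$ to be the supremum of the $\e$ for which both smallness thresholds hold, the contraction mapping theorem gives, for every $\e<\e_0(p,q)$, a unique fixed point $\p\in\mathcal{U}$.

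It then remains to read off the geometry from the fixed point. Unwinding $\p=L_{\o_\e}^{-1}(-S(\o_\e)-Q_{\o_\e}(\p))$ gives $S(\o_\e)+L_{\o_\e}\p+Q_{\o_\e}(\p)=0$, which by the definition of $L_{\o_\e}$ and $Q_{\o_\e}$ as the linear and higher-order parts of $\p\mapsto S(\o_\e+i\dd\p)$ means exactly $S(\o_\e+i\dd\p)=0$; thus $\o_\e+i\dd\p$ is a scalar flat \K\ metric on $Bl_{p,q}X$, lying in the class $\pi^*[\o]-\e^2a_pPD[E_p]-\e^2a_qPD[E_q]$ of the pre-glued metric (adding $i\dd\p$ does not change the class), as in Theorem \ref{cscKALE}. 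The one genuinely hard ingredient is (i): the uniform bound on $L_{\o_\e}^{-1}$ rests on weighted Schauder theory and a parametrix carefully patched from the model operators on $\C^m$, on $(Bl_0\C^m,\eta_{BS})$ and on $(X,\o)$ together with their mapping properties in the weights $\de,\de_\infty$; but this is precisely the content of the results quoted from \cite{AP} and \cite{S}, so the rest is the routine fixed-point bookkeeping carried out above.
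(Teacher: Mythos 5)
Your overall strategy --- Banach fixed point on the ball $\mathcal{U}$, reduced to a uniform bound on $L_{\o_\e}^{-1}$, a smallness estimate for $S(\o_\e)$, and a quadratic estimate for $Q_{\o_\e}$ --- is exactly the one the paper (following \cite{AP} and \cite{S}) takes, and your items (i) and (ii) are fine; indeed the paper proves the sharper bound $\|S(\o_\e)\|_{C^{0,\a}_{\de-4,\de_\infty-4}}\le C_{p,q}\,r_\e^{4-\de}$ (Lemma \ref{C} via Remark \ref{remarkpotential}). The gap is in your item (iii). In these weighted spaces the estimate
$$\|Q_{\o_\e}(\p)-Q_{\o_\e}(\p')\|_{C^{0,\a}_{\de-4,\de_\infty-4}}\le c_4\left(\|\p\|_{C^{4,\a}_{\de,\de_\infty}}+\|\p'\|_{C^{4,\a}_{\de,\de_\infty}}\right)\|\p-\p'\|_{C^{4,\a}_{\de,\de_\infty}}$$
with $c_4$ independent of $\e$ is false: $Q_{\o_\e}$ contains terms of the schematic form $(\nabla^2\p)(\nabla^4\p)$, which on the gluing annulus $A_p$ (where $|z|\sim r_\e$) are of size $r_\e^{2\de-6}\|\p\|^2_{C^{4,\a}_{\de}}$, while membership in $C^{0,\a}_{\de-4}$ only tolerates $r_\e^{\de-4}$; the best constant in your (iii) therefore degenerates like $r_\e^{\de-2}\to\infty$. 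The correct statement (this is the content of \cite[Prop.\ 8.20]{S} and the reason the paper's estimate {\bf (i)} carries the hypothesis $\|\p\|_{C^{4,\a}_{2,\de_\infty}}\le c_1$) puts one factor in the weight-$2$ norm, and one then uses $\|\p\|_{C^{4,\a}_{2,\de_\infty}}\le r_\e^{\de-2}\|\p\|_{C^{4,\a}_{\de,\de_\infty}}\le c_1$ on $\mathcal{U}$.

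This is not cosmetic, because your bookkeeping leans on the spurious extra decay. With the correct quadratic estimate the Lipschitz constant of $\mathcal{N}$ on $\mathcal{U}$ is of order $K_{p,q}c_1$, \emph{independent of $\e$}, so the contraction property forces $c_1$ to be chosen \emph{small}, namely $c_1<1/2K_{p,q}$ --- the opposite of your choice $c_1:=\max\{2c_2c_3,1\}\ge 1$ --- and the term you discard as being of higher order $r_\e^{2(2-\de)}$ is actually of order $r_\e^{2-\de}$, i.e.\ comparable to the radius of $\mathcal{U}$. The argument still closes, but only thanks to the sharper scalar curvature bound: $\|\mathcal{N}(0)\|\le K_{p,q}C_{p,q}r_\e^{4-\de}=K_{p,q}C_{p,q}r_\e^{2}\cdot r_\e^{2-\de}\le\tfrac12 c_1 r_\e^{2-\de}$ precisely when $2K_{p,q}C_{p,q}r_\e^{2}<c_1$, and this last inequality is what defines $\e_0(p,q)$ in Remark \ref{fixpointKC} (your weaker bound $\|S(\o_\e)\|\lesssim r_\e^{2-\de}$ would not suffice once $c_1$ is forced to be small). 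So: replace (iii) by its weight-$2$ version, take $c_1$ small of order $1/K_{p,q}$, and use the $r_\e^{4-\de}$ estimate on $S(\o_\e)$; the remaining steps, including the identification of the fixed point with a scalar flat metric in the class $\pi^*[\o]-\e^2a_pPD[E_p]-\e^2a_qPD[E_q]$, are correct as you wrote them.
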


\noindent The key estimates to prove the above Proposition are the following 
\begin{description}
	\item [(i)] there exists a constant $c_1$ such that if $\|\p\|_{C^{4,\a}_{2,\de_\infty}(Bl_{p,q}X)}\leq c_1$ then 
	$$\|\mathcal{N}(\p)-\mathcal{N}(0)\|_{C^{4,\a}_{\de,\de_\infty}(Bl_{p,q}X)}\leq \frac{1}{2}\|\p\|_{C^{4,\a}_{\de,\de_\infty}(Bl_{p,q}X)}.$$
	\item[(ii)] for the same constant $c_1$, $\|\mathcal{N}(0)\|_{C^{4,\a}_{\de,\de_\infty}(Bl_{p,q}X)}\leq \frac{1}{2}c_1r_\e^{2-\de}$.
\end{description}
 
 \begin{Remark}
 	\label{fixpointKC}
 		Note that
 	\begin{equation*}
 	\|\mathcal{N}(\p)-\mathcal{N}(0)\|_{C^{4,\a}_{\de,\de_\infty}(Bl_{p,q}X)}=\|L_{\o_\e}^{-1}(Q_{\o_\e}(0)-Q_{\o_\e}(\p))\|_{C^{4,\a}_{\de,\de_\infty}}\leq K_{p,q}\|Q_{\o_\e}(\p))\|_{C^{0,\a}_{\de-4,\de_\infty-4}}\leq K_{p,q}c_1\|\p\|_{C^{4,\a}_{\de,\de_\infty}}
 	\end{equation*}
 	\begin{equation*}
 	\|\mathcal{N}(0)\|_{C^{4,\a}_{\de,\de_\infty}(Bl_{p,q}X)}=\|L_{\o_\e}^{-1}(-S({\o_\e}))\|_{C^{4,\a}_{\de,\de_\infty}(Bl_{p,q}X)}\leq K_{p,q}\|S(\o_\e))\|_{C^{0,\a}_{\de-4}(Bl_{p,q}X)}\leq K_{p,q}C_{p,q}r_\e^{4-\de}.
 	\end{equation*}
 	Therefore, {\rm\bfseries{(i)}} follows once $c_1$ is chosen less than $1/2K_{p,q}$ and $\e_0(p,q)$ is chosen such that for every $\e<\e_0$
 $$2K_{p,q}C_{p,q}r_\e^2<c_1, \quad r_\e=\e^{\frac{m-1}{m}}.$$
 \end{Remark}

\noindent Thus, the next two subsections will be dedicated to understand the definition and construction of $C_{p,q}$ and $K_{p,q}$. Next, they will be compared with the same constants $C_p$ and $K_p$ (in the case of $Bl_pX$) when $q$ goes to infinity.

\subsection{Comparison between $C_{p,q}$ with $C_p$}
In this part we will to compare the estimates on the scalar curvature for the preglued metrics for  $Bl_{p,q}X$ and $Bl_pX.$
It is known that there exist $C_p$ and  $C_{p,q}$ defined by
\begin{eqnarray*}
C_{p,q}&:=&\inf\{C>0\mid \|S(\o_\e)\|_{C^{0,\a}_{\de-4}(Bl_{p,q}X)}\leq C_{p,q}r_\e^{4-\de}\},\\
C_p&:=&\inf\{C>0\mid \|S(\o_\e)\|_{C^{0,\a}_{\de-4}(Bl_{p}X)}\leq C_{p}r_\e^{4-\de}\}.
\end{eqnarray*}

\begin{Lemma}
	\label{C}
	There exists a compact subset $K$ which contains $p$ such that $C_{p,q}\leq C_p$ for every $q\in X\setminus K$.
\end{Lemma}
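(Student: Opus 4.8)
The statement compares the size of the scalar curvature of the preglued metric $\tilde\omega_\varepsilon$ on $Bl_{p,q}X$ with the one on $Bl_p X$, measured in the weighted norm $C^{0,\a}_{\de-4}$. The plan is to exploit the fact that $\tilde\omega_\varepsilon$ is \emph{local}: away from the gluing annuli $A_p$, $A_q$ and the necks $N_p$, $N_q$ the preglued metric coincides with the original scalar flat metric $\omega$, so $S(\tilde\omega_\varepsilon)$ is supported in $A_p\cup N_p\cup A_q\cup N_q$. First I would decompose, using the definition \eqref{norm} of the weighted norm,
\begin{equation*}
\|S(\tilde\omega_\varepsilon)\|_{C^{0,\a}_{\de-4}(Bl_{p,q}X)}=\max\left\{\|S(\tilde\omega_\varepsilon)\|_{C^{0,\a}_{\de_\infty-4}(X_{p,q})},\ \|S(\tilde\omega_\varepsilon)\|_{C^{0,\a}_{\de-4}(N_p)},\ \|S(\tilde\omega_\varepsilon)\|_{C^{0,\a}_{\de-4}(N_q)},\ \|S(\tilde\omega_\varepsilon)\|_{C^{0,\a}_{\de-4}(A_p)},\ \|S(\tilde\omega_\varepsilon)\|_{C^{0,\a}_{\de-4}(A_q)}\right\},
\end{equation*}
noting the first term vanishes (scalar flatness of $\omega$) and the neck contributions vanish as well ($\eta_{BS}$ is scalar flat, and scaling is conformal-invariant for this equation up to the $\varepsilon^2$ factor which is already accounted for). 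So the whole norm is governed by the two annular pieces $A_p$ and $A_q$.

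Next I would estimate $\|S(\tilde\omega_\varepsilon)\|_{C^{0,\a}_{\de-4}(A_q)}$. On $A_q$ the metric is $\omega_E+i\dd\p_{q,\varepsilon}(z)$, and the scalar curvature operator applied to this can be written schematically as a linear term $L_{\omega_E}\p_{q,\varepsilon}$ plus a quadratic (and higher) term $Q(\p_{q,\varepsilon})$; since $L_{\omega_E}=\Delta^2$ on $\C^m$ and the leading parts $\tfrac{|z|^2}{2}$, $\varepsilon^2\tfrac{|w|^2}{2}$ are harmonic-biharmonic, the whole contribution is controlled by the $C^{4,\a}_\de(A_q)$-norm of $\p_{q,\varepsilon}$, which by Remark \ref{remarkpotential}, precisely \eqref{potentialq}, is bounded by $(c_p+c)r_\varepsilon^{4-\de}$ once $q$ is far enough out. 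The same computation on $A_p$ gives a bound by $(c_p+c)r_\varepsilon^{4-\de}$. Combining through the $\max$, we get $\|S(\tilde\omega_\varepsilon)\|_{C^{0,\a}_{\de-4}(Bl_{p,q}X)}\le (c_p+c)r_\varepsilon^{4-\de}$ with a constant independent of the location of $q$, hence by definition of the infimum $C_{p,q}\le c_p+c$. On the other hand, the analogous analysis for $Bl_p X$ shows $C_p$ is comparable to $c_p+c$ up to the same universal constants (indeed the $A_p$-term is common to both). The point is that the constant appearing from the $A_q$-piece is dominated by the one from $A_p$ once $c_q\le c_p$, which holds for $q$ outside a large enough compact set $K\ni p$ by the ALE decay of the curvature of $\omega$; this is exactly the "simple but crucial observation" recorded after \eqref{potentialp}. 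Choosing that $K$ gives $C_{p,q}\le C_p$ for all $q\in X\setminus K$.

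The main obstacle I anticipate is bookkeeping rather than a deep difficulty: one must be careful that the constant $c$ in \eqref{potentialp}–\eqref{potentialq} coming from the model $\eta_{BS}$ and the cutoff $\gamma_1$ is genuinely independent of the point (it is, since after the rescaling $z=\varepsilon w$ the neck of $Bl_0\C^m$ looks the same near every $p_i$), and that the cutoff-derivative terms — where $i\dd$ falls on $\gamma_1$ rather than on $\p_1$ or $\p_2$ — are also uniformly controlled; these are the terms that are typically the most delicate in gluing estimates because the cutoff lives on the annulus of width $\sim r_\varepsilon$. But since the difference $\p_1-\varepsilon^2\p_2(\varepsilon^{-1}\cdot)$ is itself $O(r_\varepsilon^{4-\de})$ in the relevant weighted norm on $A_p$ and $A_q$ uniformly, differentiating the cutoff costs only powers of $r_\varepsilon^{-1}$ that are absorbed by the weight $\de-4$ in the target space, exactly as in \cite[Section 8]{S}; so the uniformity survives. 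The conclusion $C_{p,q}\le C_p$ for $q\in X\setminus K$ then follows.
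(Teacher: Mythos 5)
Your proposal is correct and follows essentially the same route as the paper: reduce the weighted norm of $S(\tilde\omega_\varepsilon)$ to the two annular pieces $A_p$, $A_q$ (everything else vanishing by scalar flatness of $\omega$ and $\eta_{BS}$), bound each by $C_1\|\p_{\cdot,\e}\|_{C^{4,\a}_{\de}}$ with a point-independent constant, and conclude via the observation $c_q\leq c_p$ for $q$ outside a large compact set from Remark \ref{remarkpotential}. The only cosmetic difference is that the paper makes the intermediate bound explicit by writing out the components of the Ricci tensor, while you invoke the linear-plus-quadratic structure of the scalar curvature operator; both yield the same estimate.
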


\begin{proof}
By definition of the weighted norm, we have
\begin{equation}
\begin{split}
\label{S1}
\|S(\o_\e)\|_{C^{0,\a}_{\de-4}(Bl_{p}X)}&=\|S(\o_E+i\d\bd\p_{p,\e})\|_{C^{0,\a}_{\de-4}(A_p)}\\
\|S(\o_\e)\|_{C^{0,\a}_{\de-4}(Bl_{p,q}X)}&=\max\left\{\|S(\o_E+i\d\bd\p_{p,\e})\|_{C^{0,\a}_{\de-4}(A_p)},\|S(\o_E+i\d\bd\p_{q,\e})\|_{C^{0,\a}_{\de-4}(A_q)}\right\}\\
\end{split}
\end{equation}
Moreover, given that $S(\o)=(g_{\o})^{\a\bb}(R_{\o})_{\a\bb}$, then we compute 
\begin{eqnarray*}
	(g_{p,\e})_{j\bar{k}}&=&\de_{jk}+\frac{\d^2\p_{p,\e}}{\d z_j \d\bar{z}_k}(z)\\
	(g_{q,\e})_{j\bar{k}}&=&\de_{jk}+\frac{\d^2\p_{q,\e}}{\d z_j \d\bar{z}_k}(z)\\
	(R_{p,\e})_{\a\bb}&=& 2(g_{p,\e})^{j\bar{k}}\left[\left(\frac{\d^3 \p_{p,\e}}{\d z_\a\d z_j \d\bar{z}_k}\right)\left(\frac{\d^3\p_{p,\e}}{ \d \bar{z}_\b\d z_j \d\bar{z}_k}\right)-\left(\frac{\d^4\p_{p,\e}}{\d z_\a \d \bar{z}_\b\d z_j \d\bar{z}_k}\right)\right]\\
	(R_{q,\e})_{\a\bb}
	&=& 2(g_{q,\e})^{j\bar{k}}\left[\left(\frac{\d^3 \p_{q,\e}}{\d z_\a\d z_j \d\bar{z}_k}\right)\left(\frac{\d^3\p_{q,\e}}{ \d \bar{z}_\b\d z_j \d\bar{z}_k}\right)-\left(\frac{\d^4\p_{q,\e}}{\d z_\a \d \bar{z}_\b\d z_j \d\bar{z}_k}\right)\right]\\
\end{eqnarray*}

\noindent It is immediate to check that there exists a constant $c_1$  such that
$$\|\nabla^i\p_{p,\e}\|_{C^{4-i,\a}_{\de-i}}\leq c_1 \|\p_{p,\e}\|_{C^{4,\a}_{\de}}, \quad \|\nabla^i\p_{q,\e}\|_{C^{4-i,\a}_{\de-i}}\leq c_1 \|\p_{q,\e}\|_{C^{4,\a}_{\de}}, $$
where the constant $c_1$ is independent of the function and on the points.

\noindent Thus, there exist constants $C_1$, independent of $\p_{p,\e}$ and $\p_{q,\e}$, respectively, such that
$$\|S(\o_E+i\d\bd\p_{p,\e})\|_{C^{0,\a}_{\de-4}(A_p)}\leq C_1 \|\p_{p,\e}\|_{C^{4,\a}_{\de}(A_p)}$$
$$\|S(\o_E+i\d\bd\p_{q,\e})\|_{C^{0,\a}_{\de-4}(A_q)}\leq C_1 \|\p_{q,\e}\|_{C^{4,\a}_{\de}(A_q)}$$

\noindent Using Remark \ref{remarkpotential} taking a sufficiently big compact subset $K$ containing $p$, we obtain
\begin{equation}
\begin{split}
\|S(\o_\e)\|_{C^{0,\a}_{\de-4}(Bl_{p,q}X)}&= \max\left\{\|S(\o_E+i\d\bd\p_{p,\e})\|_{C^{0,\a}_{\de-4}(A_p)},
\|S(\o_E+i\d\bd\p_{q,\e})\|_{C^{0,\a}_{\de-4}(A_q)}\right\}\\
&\leq \max\left\{ C_1 \|\p_{p,\e}\|_{C^{4,\a}_{\de}(A_p)}, C_1 \|\p_{q,\e}\|_{C^{4,\a}_{\de}(A_q)}\right\}\\
&\leq C_1(c_p+c)r_\e^{4-\de}.
\end{split}
\end{equation}

for $q \in X\setminus K$.

\noindent Therefore, $C_{p,q}\leq C_1(c_p+c)=:C_p.$
\end{proof}

\subsection{Comparison between $K_{p,q}$ with $K_p$}

In this part we will to compare the bounds on the norm of the inverse linear operator associated to the scalar curvature in the construction of metrics on $Bl_{p,q}X$ and on $Bl_pX.$

\noindent It is known that for every $\p\in C^{4,\a}_{\de,\de_\infty}(Bl_{p,q}X) $ and $\psi\in C^{4,\a}_{\de,\de_\infty}(Bl_{p}X) $ there exist $K_{p,q}$ and $K_p$ defined by
\begin{eqnarray*}
	K_{p,q}&:=&\inf\{K>0\mid \|\p\|_{C^{4,\a}_{\de,\de_\infty}(Bl_{p,q}X)}\leq K\|L_{\o_\e}(\p)\|_{C^{0,\a}_{\de-4,\de_\infty-4}(Bl_{p,q}X)}  \},\\
	K_p&:=&\inf\{K>0\mid \|\psi\|_{C^{4,\a}_{\de,\de_\infty}(Bl_{p}X)}\leq K\|L_{\o_\e}(\psi)\|_{C^{0,\a}_{\de-4,\de_\infty-4}(Bl_{p}X)}\}.
\end{eqnarray*}

\begin{Lemma}
	\label{K}
$K_{p,q}$ goes to $K_0\leq K_p$ as $q$ goes to infinity.
\end{Lemma}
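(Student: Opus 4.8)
The plan is to mimic the structure of the proof of Lemma \ref{C}, now for the \emph{inverse} of the linearized operator rather than for the scalar curvature itself. The quantity $K_{p,q}$ is the operator norm of $L_{\tilde\o_\e}^{-1}$ on the weighted spaces attached to $Bl_{p,q}X$, and the claim is that, letting $r(q)\to\infty$, this norm converges to a limit no larger than the corresponding norm $K_p$ for $Bl_pX$. First I would recall how $K_{p,q}$ is actually produced in \cite{AP}, \cite{S}: one does not invert $L_{\tilde\o_\e}$ directly, but rather builds an approximate inverse by patching together the inverse of the linearized operator on $X^*$ (with weight $\delta_\infty$ at infinity and $\delta$ near each puncture) and the inverse of the linearized operator on each model $N_{p_i}\cong Bl_0\C^m$ with the Burns–Simanca metric, using the cutoff functions $\gamma_1$ supported on the annuli $A_{p_i}$. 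The norm of the resulting parametrix, and hence $K_{p,q}$, is controlled by three ingredients: (a) the norm $\Lambda_\infty$ of the inverse on $X^*$, which depends only on $(X,\Gamma,h,\omega)$, the positions of the punctures and the weights — and in the limit $r(q)\to\infty$ the extra puncture at $q$ ``escapes to infinity'' so this contribution converges to the one for a single puncture at $p$; (b) the norm of the inverse on the model $N_q$ at $q$, which is literally the same number as for $N_p$ since both use the fixed metric $\eta_{BS}$ (after the rescaling $z=\e w$ the model problem is $\e$-independent up to the usual scaling factors); and (c) the error terms coming from commutators $[L_{\tilde\o_\e},\gamma_1]$ supported on $A_p$ and $A_q$, which by Remark \ref{remarkpotential} and \eqref{potentialq} satisfy the \emph{same} bounds near $q$ as near $p$ once $q$ is far enough out.

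The key steps, in order, are: (1) State precisely the parametrix construction and record the estimate $K_{p,q}\le F\big(\Lambda_\infty(p,q),\ \Lambda_{N}(p),\ \Lambda_{N}(q),\ \|[L,\gamma_1]\|_{A_p},\ \|[L,\gamma_1]\|_{A_q}\big)$ for an explicit monotone function $F$ — this is just reading off the relevant estimates from \cite[Section 8]{S}, which ``go through verbatim'' as already noted in the text. (2) Observe $\Lambda_N(q)=\Lambda_N(p)$ because both model operators are the fixed Burns–Simanca linearization; this kills the model contributions. (3) Show the commutator terms on $A_q$ are bounded by those on $A_p$: the commutator norm is controlled by $\|\tilde\o_\e-\o_E\|$, i.e. by the $C^{4,\a}_\delta$-norms of the gluing potentials, and \eqref{potentialq} gives $\|\p_{q,\e}\|_{C^{4,\a}_\delta(A_q)}\le(c_p+c)r_\e^{4-\de}$ for $q\in X\setminus K$ with $K$ a large enough compact set — exactly the same bound as for $\p_{p,\e}$. (4) Handle the term $\Lambda_\infty(p,q)$: as $r(q)\to\infty$, the two-punctured manifold $X^*_{p,q}$ converges (on every compact set, and in the relevant weighted sense, using that $\o$ is ALE) to the one-punctured manifold $X^*_p$, and standard elliptic estimates / a contradiction-and-compactness argument show $\limsup_{r(q)\to\infty}\Lambda_\infty(p,q)\le\Lambda_\infty(p)$; moreover the limit exists because the family is monotone-ish / precompact. (5) Feed the limits into $F$ and use monotonicity to conclude $K_{p,q}\to K_0\le K_p$.

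The main obstacle I expect is step (4): controlling $\Lambda_\infty(p,q)$ uniformly as $q\to\infty$. Adding a second puncture could a priori create a small eigenvalue / shrink the first eigenvalue of the relevant Laplacian-type operator on the weighted space, which would blow up the norm of the inverse. The resolution is precisely that the puncture at $q$ recedes to the ALE end: by a blow-down/rescaling argument near $q$, the ``bad'' directions localize in a region where the metric is uniformly close to Euclidean and the Burns–Simanca model governs the behavior, so no new small eigenvalue appears — more carefully, one runs the usual Schauder-plus-contradiction argument of \cite[Section 8]{S}: if $\Lambda_\infty(p,q_j)\to\infty$ along some $q_j\to\infty$, extract a normalized sequence $\psi_j$, and the weight structure forces the limit to be a nonzero decaying solution of the linearized equation either on $X^*_p$ (contradicting invertibility there) or on the Burns–Simanca model / on $\R^{2m}$ (contradicting the model injectivity), giving a contradiction. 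One must also be slightly careful that the weight $\delta_\infty$ at the true ALE infinity is unaffected by sending $q$ out — it is, since $q$ stays at finite (if large) distance — and that the definition of the weighted norm on $Bl_{p,q}X$ versus $Bl_pX$ matches up in the limit, which is where the ALE decay \eqref{omega}, \eqref{eta} and the remark $c_q\le c_p$ are used again.
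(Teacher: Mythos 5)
Your route is genuinely different from the one in the paper. You propose to re-run the parametrix construction of \cite{AP}, \cite{S} for the two-point problem, tracking the norm of the approximate inverse through its three ingredients (the inverse on the punctured base, the inverses on the two Burns--Simanca models, and the cutoff/commutator errors), and to control the base contribution $\Lambda_\infty(p,q)$ as $r(q)\to\infty$ by a blow-up/contradiction-compactness argument. The paper instead argues directly from the definition of $K_{p,q}$ as an optimal constant: given $\p\in C^{4,\a}_{\de,\de_\infty}(Bl_{p,q}X)$ it splits $\p$ into $\cp$ and $\bcp$ with cutoffs supported away from, respectively near, $q$; it tests the definition of $K_p$ against $\cp$ on $Bl_pX$, transplants the compactly supported $\bcp$ to a neighbourhood of $p$ (at the cost of a constant $K_1$ depending only on $p$), and observes that every error term produced by the cutoffs lives in the region near $q$ and is therefore bounded by $r(q)^{\de_\infty}$ times the full norm; since $\de_\infty<0$ these terms are absorbed into the left-hand side as $r(q)\to\infty$, yielding $\|\p\|\le K_p\|L_{\o_\e}(\p)\|$ in the limit. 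This is softer than your scheme: no re-derivation of invertibility and no compactness argument is needed, only the already-known constant $K_p$ together with the weight decay at the ALE end. What your approach buys is a self-contained proof of uniform invertibility of the two-point operator (useful if one did not already trust that construction), at the cost of the heavier step (4).

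There is, however, one concrete gap relative to the statement as written. The parametrix yields an upper bound $K_{p,q}\le F(\dots)$ whose one-point specialization $F\bigl(\Lambda_\infty(p),\Lambda_N(p),\dots\bigr)$ is only an \emph{upper bound} for $K_p$: the paper defines $K_p$ as an infimum, and the optimal constant may be strictly smaller than what any particular parametrix produces. Hence your steps (1)--(5) give $\limsup K_{p,q}\le \widetilde{K}_p$ with $\widetilde{K}_p\ge K_p$, which is formally weaker than $K_0\le K_p$. This is harmless for the application (the proof of Theorem \ref{epsilon} survives if one consistently replaces the optimal constants by the parametrix ones), but to obtain the lemma as stated you must, as the paper does, test the definition of the optimal $K_p$ directly against a transplanted function rather than compare two parametrix bounds. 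Separately, your step (4) only establishes a $\limsup$ bound, and the existence of the limit $K_0$ is asserted rather than proved (``monotone-ish / precompact''); the paper's own proof shares this feature, so I would not count it against you.
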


\begin{proof}
Let $\p\in C^{4,\a}_{\de,\de_\infty}(Bl_{p,q}X)$. We define the functions $\chi \p$ on $Bl_p(X)$ and $\bar{\chi}\p$ on $Bl_q X$ as follows
\begin{multicols}{2}
	$\cp=\begin{cases}
	\p(z)\quad \mbox{on } X_{p,q}\\
	\p(z)\quad \mbox{on } A_{p}\\
	\p(z)\quad \mbox{on } N_{p}\\
	\gamma(\frac{|z|}{r_\e})\p(z) \; \mbox{on } A_{q}\\
	0\quad \mbox{on } N_{q}
	\end{cases}$ 
	$\bcp=\begin{cases}
	0\quad \mbox{on } X_{p,q}\\
	0\quad \mbox{on } A_{p}\\
	0\quad \mbox{on } N_{p}\\
	\bar{\gamma}(\frac{|z|}{r_\e})\p(z) \; \mbox{on } A_{2q}\\
	\p(z)\quad \mbox{on } A_{q}\\
		\p(z)\quad \mbox{on } N_{q}
	\end{cases}$
\end{multicols} 

\noindent where $\gamma, \bar{\gamma}:\R\to [0,1]$ are smooth functions such that
\begin{multicols}{2}
	$\gamma(x)=\begin{cases}
1 \quad \mbox{if} \quad x\geq 2\\
0 \quad \mbox{if} \quad x\leq 1
\end{cases}$ and $\quad$
$\bar{\gamma}(x)=\begin{cases}
0 \quad \mbox{if} \quad x\geq 3\\
1 \quad \mbox{if} \quad x\leq 2
\end{cases}$
\end{multicols}

\noindent Note that it is possible to identify $\bcp$ around $p$, i.e. to compute the norm of the same function but with the metric near $p$. In this case, there exists a constant $K_1$, depending on $p$, such that
\begin{equation*}
\begin{split}
\|\bcp\|_{C^{4,\a}_{\de,\de_\infty}(Bl_{q}X)}&=\|\bcp\|_{C^{4,\a}_{\de_\infty}(X_q)}+\|\bcp\|_{C^{4,\a}_{\de}(A_{q})}+\|\bcp\|_{C^{4,\a}_{\de}(N_{q})} \quad \mbox{($\bcp$ has compact support)}\\
&\leq K_1(\|\bcp\|_{C^{4,\a}(X_p)}+\|\bcp\|_{C^{4,\a}_{\de}(A_{p})})+\|\bcp\|_{C^{4,\a}_{\de}(N_{p})}.
\end{split}
\end{equation*}

\noindent Since $$\|\p\|_{C^{4,\a}_{\de,\de_\infty}(Bl_{p,q}X)}=\|\p\|_{C^{4,\a}_{\de_\infty}(X_{p,q})}+\max\left\{\|\p\|_{C^{4,\a}_{\de}(A_{p})},\|\p\|_{C^{4,\a}_{\de}(A_{q})}\right\}+\|\p\|_{C^{4,\a}_{\de}(N_{p})}+\|\p\|_{C^{4,\a}_{\de}(N_{q})},$$ then we have 
\begin{align}
&\|\p\|_{C^{4,\a}_{\de,\de_\infty}(Bl_{p,q}X)}
\leq \|\cp\|_{C^{4,\a}_{\de,\de_\infty}(Bl_{p}X)}+\|\bcp\|_{C^{4,\a}_{\de,\de_\infty}(Bl_{q}X)} \\
\\
&\quad \leq K_{p}\|L_{\o_\e}(\cp)\|_{C^{0,\a}_{\de-4,\de_\infty-4}(Bl_p X)}+ K_1(\|\bcp\|_{C^{4,\a}(X_p)}+\|\bcp\|_{C^{4,\a}_{\de}(A_{p})})+\|\bcp\|_{C^{4,\a}_{\de}(N_{p})}\\
\\
&\quad \leq K_{p}\left(\|L_{\o}(\cp)\|_{C^{0,\a}_{\de_\infty-4}(X_{p,q})}+\|L_{\o}(\cp)\|_{C^{0,\a}_{\de_\infty-4}(A_q)}+\|L_{\o_{\e}}(\cp)\|_{C^{0,\a}_{\de-4}(A_p)}+\|L_{\e^2\eta_{BS}}(\cp)\|_{C^{0,\a}_{\de-4}(N_p)}\right.\\
&\quad\quad\quad \quad\left.+K_1\|L_{\o}(\bcp)\|_{C^{0,\a}(X_{p})}+K_1\|L_{\o_{\e}}(\bcp)\|_{C^{0,\a}_{\de-4}(A_p)}+\|L_{\e^2\eta_{BS}}(\bcp)\|_{C^{0,\a}_{\de-4}(N_p)}\right)\\
\\
&\quad \leq K_{p}\|L_{\o_\e}(\p)\|_{C^{0,\a}_{\de-4,\de_\infty-4}(Bl_{p,q} X)}+K_p\!\left(\|L_{\o}(\cp)\|_{C^{0,\a}_{\de_\infty-4}(A_q)}\!+\!K_1(\|L_{\o}(\bcp)\|_{C^{0,\a}(X_{p})}\!+\!\|L_{\o_{\e}}(\bcp)\|_{C^{0,\a}_{\de-4}(A_p)})\right).
\end{align}

\noindent Moreover, as $L$ is a linear operator at each metric, we get that there exists a constant $K_2$, depending on $p$, such that
\begin{equation*}
\begin{split}
\|L_{\o}(\cp)\|_{C^{0,\a}_{\de_\infty-4}(A_q)}&\leq K_2\|\cp\|_{C^{4,\a}_{\de_\infty}(A_q)}\\
\|L_{\o}(\bcp)\|_{C^{0,\a}(X_{p})}&\leq K_2\|\bcp\|_{C^{4,\a}(X_{p})}\\
\|L_{\o_{\e}}(\bcp)\|_{C^{0,\a}_{\de-4}(A_p)}&\leq K_2\|\bcp\|_{C^{4,\a}_{\de}(A_p)}
\end{split}
\end{equation*}

\noindent Also, we may assume that there exist constants $K_3$ and $K_4$ depending only on $p$ as $q$ goes to infinity, such that
$$\|\bcp\|_{C^{4,\a}(X_{p})}\leq K_3\|\bcp\|_{C^{4,\a}(X_{q})},\quad  \|\bcp\|_{C^{4,\a}_{\de}(A_p)}\leq K_4\|\bcp\|_{C^{4,\a}_{\de}(A_q)}.$$

\noindent Thus, there exists a constant $K$ which depends on $p$ as $q$ goes to infinity such that
\begin{equation*}
\|\p\|_{C^{4,\a}_{\de,\de_\infty}(Bl_{p,q}X)}\leq K_{p}\|L_{\o_\e}(\p)\|_{C^{0,\a}_{\de-4,\de_\infty-4}(Bl_{p,q} X)}+K_pK \left(\|\cp\|_{C^{4,\a}_{\de_\infty}(A_q)}+\|\bcp\|_{C^{4,\a}(X_{q})}+\|\bcp\|_{C^{4,\a}_{\de}(A_q)}\right).
\end{equation*}

\noindent Moreover, properties on the weighted norms imply
 $$\|\cp\|_{C^{4,\a}_{\de_\infty}(A_q)}\leq r(q)^{\de_\infty}\|\cp\|_{C^{4,\a}_{\de,\de_\infty}(Bl_q(X))},$$ 
$$\|\bcp\|_{C^{4,\a}(X_{q})}+\|\bcp\|_{C^{4,\a}_{\de}(A_q)}\leq r(q)^{\de_\infty}\|\bcp\|_{C^{4,\a}_{\de,\de_\infty}(Bl_q(X))},$$
where $r(q)$ is the distance between $q$ and a compact subset $K\subset X$ containing $p$.

\noindent Since $r(q)$ goes to infinity as $q$ goes to infinity and $\de_\infty<0$, we get
$$\|\p\|_{C^{4,\a}_{\de,\de_\infty}(Bl_{p,q}X)}\leq K_{p}\|L_{\o_\e}(\p)\|_{C^{0,\a}_{\de-4,\de_\infty-4}(Bl_{p,q} X)},\quad \mbox{as} \;q\; \mbox{goes to infinity}.$$

\end{proof}

\begin{proof}[Proof of Theorem \ref{epsilon}] Following Remark \ref{fixpointKC} and {\rm\bfseries{(i)}}, {\rm\bfseries{(ii)}}, if we choose $c_1=1/2K_{p,q}$ then
	$$\e_0(p,q):=\left(\frac{1}{2K_{p,q}\sqrt{C_{p,q}}}\right)^{\frac{m}{m-1}}$$
	and for every $\e<\e_0(p,q)$ we have
	$$K_{p,q}C_{p,q}r_\e^2<\frac{1}{2}c_1.$$
	
\noindent	Using Lemma \ref{C} and Lemma \ref{K}, we get, for $q$ sufficintely far away from $p$
	$$\e_0(p,q):=\left(\frac{1}{2K_{p,q}\sqrt{C_{p,q}}}\right)^{\frac{m}{m-1}}\geq\left(\frac{1}{2K_{p,q}\sqrt{C_{p}}}\right)^{\frac{m}{m-1}}\longrightarrow \left(\frac{1}{2K_{p}\sqrt{C_{p}}}\right)^{\frac{m}{m-1}}.$$
	
\noindent	Therefore, $\e_0(p,q)$ goes to $\e_0\geq \e_0(p).$
\end{proof}

\section{ADM mass under blow ups, proof of the main results}

\noindent We now analyze the effects of the previous results on the ADM mass of a scalar flat ALE space.
Thanks to the formula due to Hein-LeBrun (\cite{HL})  (Theorem \ref{admeq}), this is easily achieved by the following

\begin{Proposition}
\label{admblow}
Let $m(X,h)$ and $m(Bl_p X,h_\e)$ be the ADM mass of $(X,\Gamma, h, \omega)$ and $Bl_pX$ respectively, where $h_\e$ is the scalar flat metric produced out of $h$ by Theorem \ref{cscKALE}. Then,
$$m(Bl_p X,h_\e)=m(X,h) + \frac{(m-1)\e^{2(m-1)}}{(2m-1)\pi^{m-1}}.$$
\end{Proposition}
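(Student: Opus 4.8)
\textbf{Proof proposal for Proposition \ref{admblow}.}

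The plan is to use the Hein--LeBrun topological formula for the ADM mass (Theorem \ref{admeq}), which expresses $m(Bl_p X, h_\e)$ purely in terms of the cohomological pairing $\langle \clubsuit(c_1), [\omega_\e]^{m-1}\rangle$ on $Bl_p X$, and then to compute how this pairing changes under the blow-up. Since $h_\e$ is scalar flat by Theorem \ref{cscKALE}, Theorem \ref{admeq} applies verbatim on $Bl_p X$ and gives
$$
m(Bl_p X, h_\e) \;=\; -\,\frac{\langle \clubsuit\bigl(c_1(Bl_p X)\bigr),\, [\omega_\e]^{m-1}\rangle}{(2m-1)\pi^{m-1}}\,.
$$
So the whole content is to evaluate this intersection number, for which I would work with $\pi \colon Bl_p X \to X$, the class $[\omega_\e] = \pi^*[\omega] - \e^2 a\, PD[E]$ given in Theorem \ref{cscKALE}, and the standard adjunction/blow-up formula $c_1(Bl_p X) = \pi^* c_1(X) - (m-1)\,PD[E]$.

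First I would expand $[\omega_\e]^{m-1} = \bigl(\pi^*[\omega] - \e^2 a\, PD[E]\bigr)^{m-1}$ by the binomial theorem and pair it with $c_1(Bl_p X)$, using the following intersection facts on the blow-up of a point in an $m$-fold: $\pi^* \alpha \cup \pi^*\beta = \pi^*(\alpha\cup\beta)$, any positive power of $\pi^*(\text{anything})$ pairs with $PD[E]^j$ trivially for $0<j<m$ (since $E$ retracts onto $\PP^{m-1}$ and $\pi$ collapses it), and $PD[E]^m = (-1)^{m-1}$ with the usual orientation conventions, while $PD[E]^{m-1}$ restricts to the generator of $H^{2m-2}(E)$. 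The cross terms mixing $\pi^*[\omega]$ with a single power of $PD[E]$ vanish because $\pi_* PD[E] = 0$; the only surviving contributions are the ``purely upstairs'' term $\langle \pi^* c_1(X), \pi^*[\omega]^{m-1}\rangle = \langle c_1(X), [\omega]^{m-1}\rangle$ (which reproduces $m(X,h)$ after dividing by $-(2m-1)\pi^{m-1}$), plus one genuinely new term coming from $-(m-1)PD[E]$ paired against the top self-intersection $(-\e^2 a)^{m-1} PD[E]^{m-1}$. Carefully tracking signs, that new term equals $-(m-1)(-\e^2 a)^{m-1}\langle PD[E]^m\rangle = (m-1)\e^{2(m-1)} a^{m-1}$ up to a sign that must come out positive to match the claimed formula. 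Finally I would fix the value of $a$: the Burns--Simanca metric $\eta_{BS}$ was normalized in \eqref{eta} so that $\mathrm{Vol}(E) = 1$, which forces $a = 1$ (equivalently, the coefficient is chosen so that $-\e^2 a\,PD[E]$ restricted to $E$ has the right volume), leaving exactly
$$
m(Bl_p X, h_\e) \;=\; m(X,h) \;+\; \frac{(m-1)\e^{2(m-1)}}{(2m-1)\pi^{m-1}}\,.
$$

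The step I expect to be most delicate is the bookkeeping of orientation and sign conventions in the intersection numbers $PD[E]^m$ and in the adjunction formula for $c_1(Bl_p X)$, together with making sure the clubsuit map (the lift to compactly supported cohomology) interacts correctly with $\pi^*$ and with $PD[E]$ — since $E$ is compact, $PD[E]$ already lives in $H^2_c$, but $\pi^*[\omega]$ does not, and one must check the pairing $\langle \clubsuit(\cdot), \cdot\rangle$ is computed in the right space so that the ALE ``boundary at infinity'' contributes nothing beyond what it already contributes for $X$ itself. Granting the normalization $\mathrm{Vol}_{\eta_{BS}}(E)=1$ and hence $a=1$, the rest is the routine blow-up intersection calculation outlined above.
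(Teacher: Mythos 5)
Your proposal is correct and follows essentially the same route as the paper: apply Theorem \ref{admeq} to the scalar flat metric on $Bl_pX$, substitute $c_1(Bl_pX)=\pi^*c_1(X)-(m-1)PD[E]$ and $[\omega_\e]=\pi^*[\omega]-\e^2PD[E]$ (with $a=1$ from the volume normalization of $\eta_{BS}$), and observe that only the purely pulled-back term and the top self-intersection of $PD[E]$ survive. The one sign you leave to ``come out positive'' does check out: $PD[E]^m=(-1)^{m-1}$ combined with $(-\e^2)^{m-1}$ gives $-(m-1)\e^{2(m-1)}$ inside the pairing, and the overall minus in the mass formula turns this into the claimed positive increment.
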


\begin{proof}
Since $\o_{\e}\in \pi^*[\o]-\e^2PD[E_p]$ and we have normalized the Burns-Simanca metric in a way that $[\eta_{BS}]=- PD[E_p]$ then
$$[\o_{\e}]=[\o]+\e^2[\eta], \quad c_1(Bl_pX)=\pi^*(c_1(X))-(m-1)PD[E_p].$$

\noindent By Theorem \ref{admeq} we have
\begin{eqnarray*}
	m(Bl_pX,h_\e)&=&-\frac{\langle \clubsuit(c_1(Bl_pX)),[\o_\e]^{m-1}\rangle}{(2m-1)\pi^{m-1}}\\
	&=&-\frac{\langle \clubsuit(c_1(X)-(m-1)PD[E_p]),([\o]-\e^2PD[E_p])\rangle}{(2m-1)\pi^{m-1}}\\
	&=&m(X,h) + \frac{(m-1)\e^{2(m-1)}}{(2m-1)\pi^{m-1}}.
\end{eqnarray*}
\end{proof}

\begin{proof}[Proof of Theorem  \ref{mainth1}:]
\noindent Proposition \ref{admblow} shows that each time we blow up a point $p$ we can increase the ADM mass of any amount up to $$\frac{(m-1)\e_0(p)^{2(m-1)}}{(2m-1)\pi^{m-1}}.$$ 

\noindent Theorem \ref{epsilon} shows that iterating the process taking a second point sufficiently far away from the first one, we can increase again the ADM mass by the same amount (at least). Of course, blowing up points does not change the structure groups at infinity of the original ALE space. Given that the process can be iterated infinitely many times, the theorem is proved.
\end{proof}

\begin{thebibliography}{00}
	\bibitem{advm} C. Arezzo, A. Della Vedova and L. Mazzieri. K-stability, Futaki invariants and cscK metrics on orbifold resolutions, arXiv:1808.08420.
	\bibitem{AP} C. Arezzo and F. Pacard. Blowing up and desingularizing constant scalar curvature K\"ahler manifolds, {\it Acta Math.} {\bf 196} (2006), no.2, 179--228.
	\bibitem{AP2} C. Arezzo and F. Pacard.  Blowing up K\"ahler manifolds with constant scalar curvature II, {\it Annals of Math.} {\bf 170} (2009), 685-738.	
	\bibitem{ADM} R. Arnowitt, S. Deser, and C. W. Misner. Coordinate Invariance and Energy Expressions in General Relativity {\it Phys. Rev.} {\bf 122} (1961), 997--1006.
	\bibitem{B} R. Bartnik. The mass of an asymptotically flat manifold {\it CPAM} {\bf 39}(1986), 661--693.
	\bibitem{C} E. Calabi. M\'etriques k\"ahl\'eriennes et fibr\'es holomorphes, {\it Ann. Sci. \'Ecole Norm. Sup.} {\bf 12} (1979), 269--294.
	\bibitem{CS} D. Calderbank and M. Singer. Einstein metrics and complex singularities, {\it Invent. Math.} \textbf{156} (2004), 405--443.
	\bibitem{C} P. Chru\'sciel. Boundary conditions at spatial infinity from a Hamiltonian point of view, in Topological Properties and Global Structure of Space-Time, {\it Adv. Sci. Inst. Ser. B Phys.}, {\bf 138} (1986), 49--59. 
	\bibitem{D}   S.K. Donaldson.  Scalar Curvature and Stability of Toric Varieties
 {\it J. Differential Geom.} {\bf 62}, (2002), 289-349.

	\bibitem{HV} J. Han and J. Viaclovsky. Existence and compactness theory for ALE scalar-flat K\"ahler surfaces, {\it Forum Math.} {\bf{8}} (2020).
	\bibitem{HL} H. J. Hein and C. LeBrun. Mass in K\"ahler geometry, {\it Commun. Math. Phys.} {\bf 347} (2016), 183--221. 
	\bibitem{H} N. Honda. Deformation of LeBrun's ALE metrics with negative mass, {\it Comm. Math. Phys.} {\bf 322} (2013), 127--148.
	\bibitem{J} D. Joyce. Compact Manifolds with special holonomy. {\it Oxford Mathematical Monographs,  Oxford University Press.} (2000).
  \bibitem{K} P. Kronheimer. The construction of ALE spaces as hyper-K\"ahler quotients. {\it J. Differential Geom.} {\bf 29} (1989), 665--683.
	\bibitem{L} C. LeBrun. Counter-examples to the generalized positive action conjecture,  {\it Comm. Math. Phys.}, {\bf 118} (1988), 591--596.
	\bibitem{LV} M. Lock and J. Viaclovsky. A Sm\"org\"asbord of scalar-flat K\"ahler ALE surfaces,  {\it J. Reine Agnew. Math.} {\bf 746 }(2019), 171--208.
	\bibitem{RS} Y. Rollin and M. Singer. Constant scalar curvature K\"ahler Surfaces and Parabolic stability,  {\it J. Geom. Anal.} {\bf 19} (2009), 107--136.
	\bibitem{S} G. Sz\'ekelyhidi. An introduction to extremal K\"ahler metrics, {\it American Mathematical Soc.}, {\bf 152} (2014).
	\bibitem{S2} G. Sz\'ekelyhidi. Extremal K\"ahler metrics, Proceedings of the ICM, Vol. 2, Seoul 2014, 1017-1033.
\end{thebibliography}
\end{document}